\newtheorem{lemma}{Lemma}[section]
\newtheorem{corollary}{Corollary}[section]
\newtheorem{proposition}{Proposition}[section]
\newenvironment{proof}
   {\begin{trivlist}\item[]\textbf{\bf{Proof. }}\ignorespaces}
   {\qed\end{trivlist}}
\newcommand{\qed}{{\ifmmode q.e.d. \else\unskip\nobreak\hfil
\penalty50\quad\null\nobreak\hfill $\square$ \parfillskip=0pt
\finalhyphendemerits=0\par\fi}}
\begin{document}
\begin{center}
{\bf \Large Joins of 1-planar graphs}\\[3mm]
{\bf J\'ulius Czap}$^1$\\
{\bf D\'avid Hud\'ak}$^2$  \\
{\bf Tom\'a\v s Madaras}$^3$ \\
$^1$ Department of Applied Mathematics and Business Informatics, Faculty of Economics\\ Technical University of Ko\v{s}ice, N\v{e}mcovej 32, 040 01 Ko\v{s}ice, Slovakia\\
email: julius.czap@tuke.sk\\
$^2$ VSL Software, a.s., Lomen\'a 8, 040 01 Ko\v sice, Slovakia\\
email: davehudak@gmail.com\\
$^3$ Institute of Mathematics, Faculty of Science, Pavol Jozef \v Saf\'arik University \\Jesenn\'a 5, 040 01 Ko\v sice, Slovakia\\
email: tomas.madaras@upjs.sk
\end{center}

\bigskip
{\bf Abstract:} A graph is called 1-planar if there exists its drawing in the plane such that each edge is crossed at most once. In this paper, we study 1-planar graph joins. We prove that the join $G+H$ is 1-planar if and only if the pair $[G,H]$ is subgraph-majorized (that is, both $G$ and $H$ are subgraphs of graphs of the major pair) by one of pairs $[C_3 \cup C_3,C_3], [C_4,C_4], [C_4,C_3], [K_{2,1,1},P_3]$ in the case when both factors of the graph join have at least three vertices. If one factor has at most two vertices, then we give several necessary/sufficient conditions for the bigger factor. 

\bigskip
{\bf Keywords:}  1-planar graph, join product

{\bf 2010 Mathematical Subject Classification:} 05C10, 05C62

\section{Introduction}

Throughout this paper, we consider simple graphs (not necessarily connected); the used terminology is taken from \cite{D}. 

In research of graph properties (by graph property, we mean any family of graphs closed under isomorphism) one of typical questions concerns a characterization of graphs such that the result of a particular graph operation has the desired property. Among numerous results in this direction (see, for example, the classic paper \cite{BM} on various properties of Cartesian product) we highlight the ones concerning graph properties defined by topological or geometric constraints on drawings of graphs with respect to binary operation of graph join. Recall that the {\em join product} (or shortly, join) $G+H$ of two graphs $G$ and $H$ is obtained from vertex–disjoint copies of $G$ and $H$ by adding all edges between $V(G)$ and $V(H)$. In \cite{KM}, Kulli and Muddebihal  characterized planar joins; the crossing numbers of joins of special graphs were studied in \cite{K}, \cite{K2}, \cite{KS}, \cite{KS2} and \cite{china} (the crossing number $cr(G)$ of a graph $G$ is the minimum number of crossings in any plane drawing of $G$).

In this paper, we study the conditions under which the graph join is {\em 1-planar}, which means that there exists its drawing in the plane such that each edge is crossed at most once. The family of 1-planar graphs, although introduced already in 1965 by Ringel (see \cite{ringel}) in the connection with simultaneous vertex-face coloring of plane graphs, was explored in a deeper detail quite recently (see, for example, the papers \cite{FM}, \cite{fary}, \cite{mohar}); to our knowledge, a systematic study of graph operations that preserve 1-planarity was not realized.

Our results can be divided into two groups. In Section \ref{sec:3+}, we consider the cases that both factors of the graph join have at least three vertices; here, we obtain that the join $G+H$ is 1-planar if and only if the pair $[G,H]$ is subgraph-majorized (that is, both $G$ and $H$ are subgraphs of graphs of the major pair) by one of pairs $[C_3 \cup C_3,C_3], [C_4,C_4], [C_4,C_3], [K_{2,1,1},P_3]$. The cases when one of factors has one or two vertices are treated in Section \ref{sec:2-}; since the full characterization seems to be difficult, we give several necessary/sufficient conditions for the bigger factor.

When dealing with 1-planar graphs, we will use the following notation taken from \cite{FM}. Let $G$ be a 1-planar graph and let $D=D(G)$ be a 1-planar drawing of $G$ (that is, a drawing of $G$ in the plane in which every edge is crossed at most once; we will also assume that no edge is self-crossing and adjacent edges do not cross). Given two non-adjacent edges $pq$,  $rs$ $\in E(G)$, the \textit{crossing} of $pq$, $rs$ is the common point of two arcs $\stackrel{\frown}{pq}$, $\stackrel{\frown}{rs}$ $\in D$ (corresponding to edges $pq$, $rs$). Denote by $C=C(D)$ the set of all crossings in $D$ and by $E_0$ the set of all non-crossed edges in $D$. The \textit{associated plane graph} $D^{\times}=D^{\times}(G)$ of $D$ is the plane graph such that $V(D^{\times})=V(D)\cup C$ and $E(D^{\times})=E_0 \cup \{xz,yz|xy \in E(D)-E_0, z\in C,z\in xy\}$. Thus, in $D^{\times}$, the crossings of $D$ become new vertices of degree four; we call these vertices \textit{false}. Vertices of $D^{\times}$ which are also vertices of $D$ are called \textit{true}. Similarly, the edges and faces of $D^{\times}$ are called \textit{false}, if they are incident with a false vertex, and \textit{true} otherwise.

\section{The join $G+H$ with $|V(H)| \geq 3$}\label{sec:3+}
Let $C_n$ and $P_n$ denote the cycle and the path with $n$ vertices. The disjoint union of two graphs $G$ and $H$ is denoted by $G \cup H$. The disjoint union of $k$ identical graphs $G$ is denoted by $kG$.

Let us consider the join $G+H$ with $|V(G)|=m$ and $|V(H)|=n$ such that $m \geq n \geq 3$. Observe that the complete bipartite graph $K_{m,n}$ is a subgraph of $G+H$. In the following, we will use the characterization of 1-planar complete multipartite graphs from \cite{CH}; the results are contained in Table \ref{tab:1p}, where the notation of sizes of parts of vertices is the following: $a-b$ means the set $\langle a, b\rangle\cap\mathbb{Z}$ (the interval of integers); $a-$ means all integers greater or equal to $a$.

\begin{table}
\begin{center}
\begin{tabular}{|l|l|}
  \hline                                                  
  $k=2$ & $K_{1-,1}$~;~ $K_{2-,2}$~;~ $K_{3-6,3}$~;~ $K_{4,4}$  \\           
  $k=3$ & $K_{1-,1,1}$~;~ $K_{2-6,2,1}$~;~ $K_{2-4,2,2}$~;~ $K_{3,3,1}$  \\  
  $k=4$ & $K_{1-6,1,1,1}$~;~ $K_{2-3,2,1,1}$~;~ $K_{2,2,2,1-2}$  \\         
  $k=5$ & $K_{1-2,1-2,1,1,1}$   \\                         
  $k=6$ & $K_{1,1,1,1,1,1}$    \\                                                                                                     
  \hline
  \end{tabular}
  \caption{1-planar complete $k$-partite graphs.}
  \label{tab:1p}
  \end{center}
  \end{table}

These results imply that $G+H$ is not 1-planar if $m \geq 5, n \geq 4$ or $m \geq 7, n \geq 3$.

Assuming that $G+H$ is 1-planar, among all possible 1-planar drawings of $G+H$ we denote by $D$ such a drawing that has the minimum number of crossings. Let $c$ denote the number of crossings in $D$ and let $D^\times$ be the associated plane graph of $D$. Clearly, $D^\times$ has $|V(G)|+|V(H)|+c$ vertices and $|E(G)|+|E(H)|+|V(G)|\cdot|V(H)|+2c$ edges.
Observe that, for $m\ge n\ge3$, the join $G+H$ is 3-connected. This fact and the result of Fabrici and Madaras \cite{FM} (Lemma 2.1) imply that $D^\times$ is also 3-connected. 

\begin{lemma}\label{l:max2}
Let $G$ and $H$ be graphs such that $m=n=4$. If at least one of them contains a vertex of degree three, then  $G+H$ is not 1-planar.
\end{lemma}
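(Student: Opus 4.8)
The plan is to reduce the statement to a single small configuration and then quote Table \ref{tab:1p}. By the symmetry $G+H \cong H+G$ we may assume that it is $H$ that contains a vertex $v$ of degree three. Since $|V(H)|=4$, this vertex $v$ is adjacent in $H$ to each of the remaining three vertices, so $H$ contains a spanning copy of the star $K_{1,3}$; and $G$, being a graph on four vertices, trivially contains the edgeless graph $\overline{K_4}$ on its vertex set. As the join operation is monotone in each factor, $G+H$ contains $\overline{K_4}+K_{1,3}$ as a (spanning) subgraph. Because non-1-planarity is inherited by supergraphs, it therefore suffices to show that $\overline{K_4}+K_{1,3}$ is not 1-planar.

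Next I would identify $\overline{K_4}+K_{1,3}$ explicitly. Let $b_0$ be the centre of the star and $b_1,b_2,b_3$ its leaves, and let $a_1,\dots,a_4$ be the vertices of $\overline{K_4}$. In $\overline{K_4}+K_{1,3}$ the vertex $b_0$ is adjacent to $b_1,b_2,b_3$ (edges of $K_{1,3}$) and to $a_1,\dots,a_4$ (join edges), hence to all seven other vertices; deleting $b_0$ leaves precisely the complete bipartite graph $K_{4,3}$ between $\{a_1,\dots,a_4\}$ and $\{b_1,b_2,b_3\}$. Thus $\overline{K_4}+K_{1,3}=K_1+K_{4,3}=K_{4,3,1}$; equivalently, its complement is the disjoint union of a $K_4$, a $K_3$ and an isolated vertex, so it is the complete tripartite graph with parts of sizes $4,3,1$.

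Finally I would invoke the classification of 1-planar complete multipartite graphs in Table \ref{tab:1p}. In the row $k=3$ the only admissible types are $K_{1-,1,1}$, $K_{2-6,2,1}$, $K_{2-4,2,2}$ and $K_{3,3,1}$; the graph $K_{4,3,1}$ matches none of them (it has a single part of size one but no part of size two, and it strictly contains $K_{3,3,1}$), so $K_{4,3,1}$ is not 1-planar. Consequently $G+H$ is not 1-planar either.

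There is no genuinely hard step here: the whole argument lies in the reduction and the table lookup. The only points that require care are checking that one may legitimately pass from $G+H$ to the fixed subgraph $\overline{K_4}+K_{1,3}$ (monotonicity of the join together with the fact that a degree-three vertex on four vertices forces a spanning $K_{1,3}$), and verifying that this subgraph is indeed the complete tripartite graph $K_{4,3,1}$ excluded by Table \ref{tab:1p}.
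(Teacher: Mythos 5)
Your proof is correct and follows the same route as the paper: both arguments exhibit $K_{4,3,1}$ as a subgraph of $G+H$ (you just spell out the reduction via $\overline{K_4}+K_{1,3}$ explicitly) and then conclude non-1-planarity from the characterization of 1-planar complete multipartite graphs in Table \ref{tab:1p}. No issues to report.
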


\begin{proof}
If one of these graphs contains a vertex of degree three, then  $G+H$ cannot be 1-planar, since it contains a subgraph $K_{4,3,1}$ which is not 1-planar.
\end{proof}

\begin{lemma}\label{l:tri}
$(C_3 \cup P_1)+4P_1$ is not 1-planar.
\end{lemma}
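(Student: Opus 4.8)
The plan is to assume that $(C_3\cup P_1)+4P_1$ is $1$-planar and to obtain a contradiction by combining Euler's formula with the fact that this graph has very few triangles. Denote by $abc$ the triangle and by $d$ the isolated vertex of $C_3\cup P_1$, and by $w,x,y,z$ the vertices of $4P_1$; then $K_{4,4}$ with parts $\{a,b,c,d\}$ and $\{w,x,y,z\}$ is a spanning subgraph, and — since the second factor is edgeless and the first factor's edges are exactly $ab,bc,ca$ — the only triangles of the graph are $abc$ and the triangles $\alpha\beta v$ with $\alpha\beta\in\{ab,bc,ca\}$ and $v\in\{w,x,y,z\}$; in particular every triangle uses one of the edges $ab,bc,ca$. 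Following the setup of the section, take a $1$-planar drawing $D$ with the least number $c$ of crossings and pass to the associated plane graph $D^{\times}$, which is $3$-connected. In $D^{\times}$ the true vertices $a,b,c$ have degree $6$ while every other vertex (true or false) has degree $4$, so Euler's formula gives $\sum_{f}\bigl(4-\ell(f)\bigr)=14$, where $\ell(f)$ denotes the length of a face $f$; since $\ell(f)\ge 3$, the graph $D^{\times}$ has at least $14$ triangular faces.

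Next I would collect what a triangular face of $D^{\times}$ can look like. False vertices are pairwise non-adjacent and have only true neighbours, so each triangular face contains at most one false vertex, hence at least one non-crossed edge between true vertices. A triangular face with no false vertex is a triangle of the graph, so it uses one of $ab,bc,ca$, each of which — if non-crossed — lies on at most two faces. A triangular face with one false vertex $z$, arising from a crossing of edges $e_1,e_2$, has its third side an edge joining an endpoint of $e_1$ to an endpoint of $e_2$; a short case check on which such edges can exist (again using that $\{a,b,c\}$ spans only the three triangle edges and $\{w,x,y,z\}$ is independent) shows that $z$ lies on at most two such triangular faces, with one exception: if $\{e_1,e_2\}$ is a triangle edge together with the opposite edge of some $K_4$ on $\{a,b,c,v\}$ (a ``$K_4$-crossing''), $z$ may lie on up to four, and there are at most three $K_4$-crossings, one per triangle edge. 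Moreover, deleting from the $3$-connected plane graph $D^{\times}$ its $c$ pairwise non-adjacent false vertices (all of degree $4$) leaves the plane graph formed by the non-crossed edges, to which a second application of Euler's formula gives at most $13-2c$ faces; hence $c\le 6$, while $c\ge cr(K_{4,4})=4$. Combining these estimates with the lower bound $14$ yields an upper bound on the number of triangular faces that is linear in $c$, and together with $c\in\{4,5,6\}$ this leaves only a short list of extremal configurations.

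The last step — excluding those configurations — is the real work, and I expect it to be the main obstacle. The most delicate case is $c=4$: then the subdrawing of the spanning $K_{4,4}$ is crossing-optimal, so its associated plane graph is forced (it must be the cuboctahedron, whose $8$ true vertices split $4+4$ according to the bipartition of $K_{4,4}$, with the $4$ crossings forming a perfect matching of the underlying $3$-cube); because $K_{4,4}$ is bipartite, no face of this plane graph carries three vertices of the part $\{a,b,c,d\}$, and one then checks that the three edges $ab,bc,ca$ cannot all be inserted with each crossed at most once. The cases $c=5,6$ are treated the same way after first locating the one or two additional crossings. Thus the Euler/counting argument narrows the problem down to this drawing-structural verification, which is where the actual difficulty lies.
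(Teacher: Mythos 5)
Your argument is not complete: after the Euler count you openly defer the decisive step (``excluding those configurations --- the real work''), so what you have is a programme, not a proof. Moreover, the estimates that are supposed to produce the ``short list of extremal configurations'' contain errors. First, the claim that a false vertex of an ordinary crossing lies on at most two triangular faces is false: if two black edges $au$ and $bv$ cross (with $a,b$ two triangle vertices and $u,v$ in the independent part), the three sides $ab$, $av$, $bu$ all exist, so that false vertex can lie on three triangular faces; with the corrected constants the count $6+4\cdot 3+3(c-3)\ge 14$ is satisfied for every admissible $c$, so the counting by itself does not narrow anything down in the way you claim. Second, your bound $c\le 6$ is obtained by applying Euler's formula to the subgraph of non-crossed edges as if it were connected; without connectivity the face count is $12+k-2c$ ($k$ components) and no bound better than $c\le 9$ follows. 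Third, the $c=4$ analysis rests on the unproved assertion that a crossing-optimal drawing of $K_{4,4}$ has a forced planarization (``must be the cuboctahedron''); this uniqueness statement is itself a nontrivial claim that would need an argument. So there is a genuine gap both in the reduction and in the unfinished case analysis.

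For comparison, the paper's proof avoids all of this drawing-structural work. It quotes the known crossing number $cr((C_3\cup P_1)+4P_1)=6$ from \cite{KS2}, so any 1-planar drawing has $c\ge 6$ crossings; counting vertices ($8+c$) and edges ($19+2c$) of $D^\times$ and comparing with a triangulation ($18+3c$ edges) shows that $D^\times$ can be extended, using only edges between true vertices, by at least $c-1\ge 5$ edges. If all added edges lie inside $4P_1$, one obtains a 1-planar drawing of $(C_3\cup P_1)+K_{2,1,1}$; otherwise some added edge lies inside $C_3\cup P_1$ and raises a vertex there to degree three. Either way Lemma \ref{l:max2} (via the non-1-planarity of $K_{4,3,1}$) gives the contradiction. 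If you want to salvage your approach, the most economical fix is to replace your bound $c\ge cr(K_{4,4})=4$ by the exact crossing number $6$ and then follow this triangulation-extension argument rather than attempting to classify the drawings.
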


\begin{proof}
Assume that $(C_3 \cup P_1)+4P_1$ is 1-planar. Then $D^\times$ has $8+c$ vertices and $19+2c$ edges. 

In \cite{KS2} it was proved that $cr((C_3 \cup P_1)+4P_1)=6$, hence $c \ge 6$.

Any plane triangulation on $8+c$ vertices contains $18+3c$ edges. 
No two false vertices are adjacent in $D^\times$, hence we can extend $D^\times$ to a triangulation by adding $18+3c - (19+2c) \ge 5$ edges which join only true vertices. No multiple edges arise, since otherwise the corresponding endvertices form a 2-vertex-cut in a 3-connected $D^\times$. 

First assume that the added edges join only the vertices of $4P_1$. Thus we have a 1-planar drawing of $(C_3 \cup P_1)+ K_{2,1,1}$. The graph $K_{2,1,1}$ contains a vertex of degree three, hence $(C_3 \cup P_1)+ K_{2,1,1}$ is not 1-planar (see Lemma \ref{l:max2}). Consequently, at least one added edge joins two vertices of $C_3 \cup P_1$. Similarly as above we have a 1-planar drawing of a non-1-planar graph ($C_3 \cup P_1$ with the added edge has a vertex of degree three), a contradiction.   
\end{proof}

\begin{lemma}
Let $G$ and $H$ be graphs such that $m=n= 4$. Then $G+H$ is 1-planar if and only if $G, H\subseteq C_4$.
\end{lemma}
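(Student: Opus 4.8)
The plan is to prove both directions of the equivalence. The ``if'' direction is the easy part: if $G, H \subseteq C_4$, then $G + H \subseteq C_4 + C_4$, so it suffices to exhibit an explicit 1-planar drawing of $C_4 + C_4$ (equivalently, of $K_{4,4}$ together with the two $4$-cycles). Since $K_{4,4}$ is already known to be 1-planar from Table \ref{tab:1p}, I would start from a symmetric 1-planar drawing of $K_{4,4}$ and check that the two cycles $C_4$ can be added among the vertices of each part without creating a second crossing on any edge; I expect to present this as a figure with a short verification of the crossing pattern, and 1-planarity of $G+H$ then follows since it is a subgraph.

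For the ``only if'' direction, suppose $G + H$ is 1-planar with $m = n = 4$. By Lemma \ref{l:max2}, neither $G$ nor $H$ may contain a vertex of degree three; hence every vertex of $G$ and of $H$ has degree at most two, so each of $G$ and $H$ is a disjoint union of paths and cycles on four vertices. The candidate graphs on four vertices with maximum degree at most two are therefore: $4P_1$, $P_2 \cup 2P_1$, $2P_2$, $P_3 \cup P_1$, $P_4$, $C_3 \cup P_1$, and $C_4$. I must rule out every such graph that is not a subgraph of $C_4$, i.e. I must show that $G$ cannot contain $C_3 \cup P_1$ as a subgraph (the only maximal-under-$\subseteq$ graph in the list that is not contained in $C_4$ is $C_3 \cup P_1$, since $P_4 \subseteq C_4$). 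Concretely, if $G$ (say) contains $C_3 \cup P_1$, then $G + H \supseteq (C_3 \cup P_1) + 4P_1$, which is not 1-planar by Lemma \ref{l:tri} — contradiction. Thus $G$ and $H$ each avoid $C_3 \cup P_1$ as a subgraph, which together with $\Delta \le 2$ forces $G, H \subseteq C_4$.

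The only real subtlety is the classification step: I should confirm that a four-vertex graph with $\Delta \le 2$ that does not contain $C_3 \cup P_1$ is necessarily a subgraph of $C_4$. A graph with $\Delta \le 2$ is a disjoint union of paths and cycles; on four vertices the connected such graphs are $P_1, P_2, P_3, P_4, C_3, C_4$, and the only way to use a $C_3$ on four vertices is $C_3 \cup P_1$, which is excluded; a $C_4$ is allowed; everything else is a union of paths on at most four vertices, hence a subgraph of $P_4 \subseteq C_4$. So the combinatorial bookkeeping is routine once Lemmas \ref{l:max2} and \ref{l:tri} are in hand, and the main (minor) obstacle is simply producing a clean explicit 1-planar drawing of $C_4 + C_4$ for the forward implication.
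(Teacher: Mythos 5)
Your ``only if'' direction is exactly the paper's argument, just spelled out in more detail: Lemma \ref{l:max2} gives $\Delta(G),\Delta(H)\le 2$, Lemma \ref{l:tri} excludes a triangle in either factor (since a triangle in a four-vertex graph forces $C_3\cup P_1\subseteq G$, hence $(C_3\cup P_1)+4P_1\subseteq G+H$), and the classification of four-vertex graphs with $\Delta\le 2$ and no $C_3$ as subgraphs of $C_4$ is routine and correctly handled.

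The ``if'' direction, however, is left as a promissory note. You reduce it to the 1-planarity of $C_4+C_4$ and then say you would start from a 1-planar drawing of $K_{4,4}$ and ``check that the two cycles can be added without creating a second crossing,'' presenting a figure later. That check is the entire content of the claim: it is not automatic that an arbitrary (or even a symmetric) 1-planar drawing of $K_{4,4}$ admits the eight extra cycle edges with at most one crossing per edge, and no drawing is actually exhibited. The gap is easily closed, and the cleanest repair is the paper's own one-line observation: $C_4=K_{2,2}$, so $C_4+C_4=K_{2,2,2,2}$, which already appears in Table \ref{tab:1p} (the entry $K_{2,2,2,1-2}$) as a 1-planar complete multipartite graph; hence any $G+H$ with $G,H\subseteq C_4$ is a subgraph of a 1-planar graph and therefore 1-planar, with no new drawing needed. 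So: same route as the paper for the forward implication of your argument structure, but replace the unexecuted drawing construction by the identification with $K_{2,2,2,2}$ (or else actually supply and verify the drawing).
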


\begin{proof}
If $G, H\subseteq C_4$, then $G+H$ is a subgraph of $K_{2,2,2,2}$, which is 1-planar. On the other hand, if $G+H$ is 1-planar, then the maximum degree of $G$ and $H$ is at most two (see Lemma \ref{l:max2}). Moreover, Lemma \ref{l:tri} implies that neither $G$ nor $H$ contain $C_3$ as a subgraph.  
\end{proof}

\begin{lemma}
Let $G$ and $H$ be graphs such that $m=n=3$. Then  $G+H$ is 1-planar.
\end{lemma}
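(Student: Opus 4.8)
The plan is to reduce the whole statement to a single graph. Since $|V(G)|=|V(H)|=3$, we have $G\subseteq K_3$ and $H\subseteq K_3$, and therefore $G+H$ is a subgraph of $K_3+K_3$. Now $K_3+K_3$ has six vertices and contains every one of the $\binom{6}{2}=15$ possible edges (the $3+3$ edges inside the two triangles together with the $9$ edges added by the join), so in fact $K_3+K_3=K_6=K_{1,1,1,1,1,1}$.

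Next I would invoke the 1-planarity of $K_6$. This is exactly the entry in the row $k=6$ of Table \ref{tab:1p}, taken from \cite{CH}, so nothing new needs to be proved. If one prefers an explicit witness, it can be obtained from a planar drawing of the octahedron $K_{2,2,2}$ (which is $K_6$ minus a perfect matching): the six triangular faces of the annulus between the two "antipodal'' triangles can be paired up so that each of the three missing edges is routed through one such pair, crossing exactly one edge of the octahedron, and the three new edges pairwise avoid each other.

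Finally, since 1-planarity is hereditary under subgraphs — deleting vertices or edges from a 1-planar drawing leaves a 1-planar drawing — the inclusion $G+H\subseteq K_6$ immediately gives that $G+H$ is 1-planar. There is no real obstacle in this proof: the only substantive input is the 1-planarity of $K_6$, and that is already available from Table \ref{tab:1p}, so the argument amounts to the containment $G+H\subseteq K_3+K_3=K_6$ together with heredity.
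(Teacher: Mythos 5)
Your proposal is correct and is essentially the paper's proof: the paper also argues in one line that $G+H$ is a subgraph of $K_6$, which is 1-planar (as recorded in Table \ref{tab:1p} via $K_{1,1,1,1,1,1}$), and concludes by heredity of 1-planarity under subgraphs. Your extra explicit drawing of $K_6$ from the octahedron is a harmless bonus but not needed.
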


\begin{proof}
The graph $G+H$ is a subgraph of $K_6$ which is 1-planar.
\end{proof}

\begin{figure}
\centerline{
\begin{tabular}{cccccc}
\includegraphics{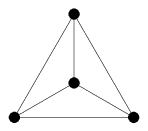}&
\includegraphics{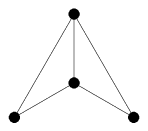}&
\includegraphics{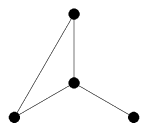}&
\includegraphics{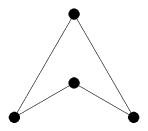}&
\includegraphics{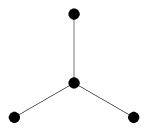}&
\includegraphics{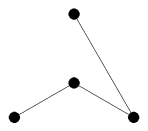}\\
$G_1$&$G_2$&$G_3$&$G_4$&$G_5$&$G_6$
\end{tabular}
}
\caption{Connected graphs on four vertices.}
\label{4}
\end{figure}

\begin{lemma}
Let $G$ and $H$ be graphs such that $m=4$ and $n= 3$. Then $G+H$ is not 1-planar if and only if  
\begin{enumerate}
\item $G=G_1$ and $H \subseteq C_3$, or
\item $G=G_i$ and $H=C_3$, $i=2,3,5$. 
\end{enumerate}
\end{lemma}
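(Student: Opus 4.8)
The plan is to deduce everything except one explicit drawing from the classification of 1-planar complete multipartite graphs in Table \ref{tab:1p}, together with the fact that 1-planarity passes to subgraphs. Reading off Figure \ref{4}: $G_1=K_4$; $G_2,G_3,G_5$ are the three connected graphs on four vertices other than $K_4$ that have a vertex of degree three (namely $K_{1,3}$, $K_{2,1,1}$, and $C_3$ with a pendant edge); and $\{G_4,G_6\}=\{P_4,C_4\}$. I will use two elementary facts about graphs on four vertices: such a graph fails to be a subgraph of $C_4$ exactly when it is one of $K_4$, $C_3\cup P_1$, $G_2$, $G_3$, $G_5$; and every graph on four vertices other than $K_4$ is a subgraph of $K_{2,1,1}$.

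For the ``if'' direction: $G_1+3P_1\cong K_{1,1,1,1,3}$ is not 1-planar by Table \ref{tab:1p}, and any $H$ on three vertices satisfies $3P_1\subseteq H\subseteq C_3$, so $G_1+H\supseteq G_1+3P_1$ is not 1-planar; this is case~1. For case~2, each of $G_2,G_3,G_5$ contains $K_{1,3}$, hence $G_i+C_3\supseteq K_{1,3}+C_3\cong K_{3,1,1,1,1}$, which is not 1-planar by Table \ref{tab:1p}.

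For the ``only if'' direction I argue the contrapositive: if $(G,H)$ is none of the listed pairs, then $G+H$ is 1-planar. If $G\ne K_4$ and $H\subseteq P_3$, then $G\subseteq K_{2,1,1}$, so $G+H\subseteq K_{2,1,1}+P_3\cong K_{2,2,1,1,1}$, which is 1-planar by Table \ref{tab:1p}. If $G\subseteq C_4$ (so $H\subseteq C_3$), then $G+H\subseteq C_4+C_3\cong K_{2,2,1,1,1}$, again 1-planar. Otherwise $G\not\subseteq C_4$ and $H=C_3$, so $G\in\{K_4,\,C_3\cup P_1,\,G_2,\,G_3,\,G_5\}$; but $G=K_4$ gives case~1 and $G\in\{G_2,G_3,G_5\}$ gives case~2, both excluded, leaving $G=C_3\cup P_1$. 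Thus it remains only to show that $(C_3\cup P_1)+C_3$ is 1-planar.

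That last point is the one genuine obstacle. In $(C_3\cup P_1)+C_3$ every independent set has at most two vertices and any two two-element independent sets meet, so its only complete multipartite supergraphs (on its seven vertices) are $K_7$ and $K_{2,1,1,1,1,1}$, and neither is 1-planar by Table \ref{tab:1p}. Hence the needed drawing cannot be obtained from Table \ref{tab:1p} and must be produced directly --- for example, by realizing $(C_3\cup P_1)+C_3$ as a subgraph of $(C_3\cup C_3)+C_3$ and exhibiting a 1-planar drawing of the latter. Producing and verifying that drawing is the whole of the work; everything else is bookkeeping with Table \ref{tab:1p} and the subgraph-monotonicity of 1-planarity.
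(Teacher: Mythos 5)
Your bookkeeping is correct and follows essentially the paper's own strategy: subgraph-monotonicity of 1-planarity plus the classification of 1-planar complete multipartite graphs in Table~\ref{tab:1p} disposes of every pair except $(C_3\cup P_1,\,C_3)$ (the paper organizes the case analysis by the number of edges of $G$ rather than by containment in $K_{2,1,1}$ or $C_4$, but the content is the same), and your observation that the independent sets of $(C_3\cup P_1)+C_3$ force its only complete multipartite supergraphs to be $K_7$ and $K_{2,1,1,1,1,1}$ is a nice explicit justification that this one pair cannot be settled by Table~\ref{tab:1p} alone.

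However, there is a genuine gap at precisely that point: you never establish that $(C_3\cup P_1)+C_3$ is 1-planar. You correctly identify what is needed --- an explicit 1-planar drawing, for instance of the supergraph $(C_3\cup C_3)+C_3$ --- but you stop short of producing or verifying one, explicitly deferring ``the whole of the work''. Since the pair $(C_3\cup P_1,\,C_3)$ is outside the excluded list, the ``only if'' direction of the lemma is unproven without this step. The required drawing does exist: the paper exhibits a 1-planar drawing of $(C_3\cup P_1)+C_3$ in Figure~\ref{fig:figa}, and a 1-planar drawing of $(C_3\cup C_3)+C_3$ (which, as you note, would also suffice) appears in Figure~\ref{fig:K36+}. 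So your approach does not fail, but as written it is incomplete at its only non-routine step; supplying (or citing) such a drawing is exactly what completes the proof.
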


\begin{proof}
We distinguish several cases according to the number of edges of $G$.

{\bf Case 1.} If $G$ has six edges, then $G = G_1$. The join $G+H$ cannot be 1-planar because it contains the subgraph  $K_{3,1,1,1,1}$ which is not 1-planar.

{\bf Case 2.} If $G$ has five edges, then $G = G_2$. In this case $G+H$ is 1-planar if and only if $H\not= C_3$. If $H= C_3$, then $G+H=K_{2,1,1,1,1,1}$ which is not 1-planar. Otherwise $G+H$ is a subgraph of the 1-planar graph $K_{2,2,1,1,1}$.

{\bf Case 3.} If $G$ has four edges, then either $G=G_3$ or $G=G_4$. The graph $G_4+H$ is 1-planar, since it is a subgraph of the 1-planar graph $K_{2,2,1,1,1}$. The graph $G_3+C_3$ is not 1-planar, since it has a not 1-planar subgraph $K_{3,1,1,1,1}$. If $H\not =C_3$, then $H \subseteq K_{2,1}$. Consequently, $G_3+H$ is a subgraph of $K_{2,2,1,1,1}$ which is 1-planar. Therefore, the graph $G_3+H$ is 1-planar if and only if $H\not=C_3$.

{\bf Case 4.} If $G$ has at most three edges, then $G$ is a subgraph of $G_5$ or $G_6$ or $C_3 \cup P_1$. If $G= G_5$ and $H= C_3$, then $G+H = K_{3,1,1,1,1}$, so it is not 1-planar. Figure \ref{fig:figa} shows a 1-planar drawing of  $(C_3 \cup P_1)+C_3$, so  $(C_3 \cup P_1)+H$ is 1-planar for $H \subseteq C_3$. In the other cases $G+H$ is a subgraph of the 1-planar graph $K_{3,2,1,1}$ or $K_{2,2,1,1,1}$. 
\end{proof}

\begin{figure}
\centerline{
\includegraphics{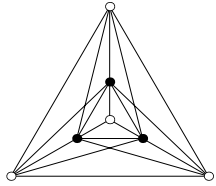}
}
\caption{A 1-planar drawing of  $(C_3 \cup P_1)+C_3$.}
\label{fig:figa}
\end{figure}

Now turn our attention to the case $m=5$ and $n=3$.

\begin{lemma}\label{l:g4+}
Let $G$ and $H$ be graphs such that $m=5$ and $n=3$. If $G$ contains a vertex of degree four, then $G+H$ is not 1-planar.
\end{lemma}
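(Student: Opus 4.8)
The plan is to produce a non-1-planar subgraph of $G+H$, exactly in the spirit of Lemma \ref{l:max2}. Since 1-planarity is inherited by subgraphs, it is enough to locate inside $G+H$ a copy of some complete multipartite graph that is missing from Table \ref{tab:1p}.

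First I would note that a vertex $v\in V(G)$ of degree four is adjacent in $G$ to all of the remaining $|V(G)|-1=4$ vertices; that is, $v$ is universal in $G$. By the definition of the join, $v$ is moreover adjacent to all three vertices of $H$, and every vertex of $V(G)\setminus\{v\}$ is adjacent to all three vertices of $H$ as well. Taking the three blocks $\{v\}$, $V(G)\setminus\{v\}$ (of size $4$), and $V(H)$ (of size $3$) together with all the edges running between distinct blocks therefore exhibits a copy of $K_{4,3,1}$ as a subgraph of $G+H$.

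By Table \ref{tab:1p}, $K_{4,3,1}$ is not 1-planar — this is the same forbidden subgraph already invoked in the proof of Lemma \ref{l:max2} (there obtained from a degree-three vertex in a graph on four vertices, here from a degree-four vertex in a graph on five vertices) — and hence $G+H$ is not 1-planar. There is essentially no obstacle in this argument: the only thing one actually has to verify is that a degree-four vertex of a graph on five vertices is adjacent to all the others, which is immediate, so I expect the proof to consist of a single short paragraph.
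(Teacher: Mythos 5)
Your proof is correct and follows essentially the same route as the paper: a degree-four vertex of $G$ yields $K_{4,1}\subseteq G$, and together with the three vertices of $H$ this gives the non-1-planar subgraph $K_{4,3,1}$ inside $G+H$.
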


\begin{proof}
If $G$ has a vertex of degree four, then it contains $K_{4,1}$ as a subgraph. Consequently, $G+H$ contains the subgraph $K_{4,3,1}$ which is not 1-planar.
\end{proof}

\begin{lemma}\label{l:c3g}
Let $G$ be a graph on five vertices. If $G+C_3$ is 1-planar, then the maximum degree of $G$ is at most two.
\end{lemma}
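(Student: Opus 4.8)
The plan is to argue by contradiction through a forbidden-subgraph argument. Suppose $G+C_3$ is 1-planar but $\Delta(G)\ge 3$. First I would dispose of the degenerate subcase: if $G$ has a vertex of degree four, then Lemma~\ref{l:g4+} already says $G+C_3$ is not 1-planar, a contradiction. Hence we may assume that $G$ has a vertex $v$ of degree exactly three.

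Let $a,b,c$ be the three neighbours of $v$ in $G$, and let $x,y,z$ denote the vertices of the $C_3$ factor. In $G+C_3$ every vertex of $G$ is joined to each of $x,y,z$, and $x,y,z$ induce a triangle. I would then check that the seven vertices $v,a,b,c,x,y,z$ span a copy of $K_{3,1,1,1,1}$, with parts $\{a,b,c\}$, $\{v\}$, $\{x\}$, $\{y\}$, $\{z\}$: the edges $va,vb,vc$ come from $G$; all nine edges between $\{v,a,b,c\}$ and $\{x,y,z\}$ are join edges; and $xy,yz,xz$ come from $C_3$. Thus $K_{3,1,1,1,1}$ is a subgraph of $G+C_3$ (using only seven of its eight vertices).

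Since $K_{3,1,1,1,1}$ is not 1-planar (it is absent from the list of 1-planar complete $5$-partite graphs in Table~\ref{tab:1p}, a part of size three being already too large), and 1-planarity is inherited by subgraphs, $G+C_3$ cannot be 1-planar, contradicting our assumption. Therefore $\Delta(G)\le 2$. There is no real obstacle here: the only step needing any care is the routine incidence check that the seven chosen vertices carry every edge of $K_{3,1,1,1,1}$, and no drawing argument is needed at all because the whole proof reduces to exhibiting a non-1-planar subgraph.
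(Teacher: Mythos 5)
Your proof is correct and uses essentially the same argument as the paper: a vertex of degree at least three in $G$ together with its three neighbours spans a $K_{3,1}$, whose join with the $C_3$ yields the non-1-planar subgraph $K_{3,1,1,1,1}$. The preliminary split into the degree-four case via Lemma~\ref{l:g4+} is harmless but unnecessary, since any vertex of degree at least three already supplies the three neighbours needed for the subgraph.
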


\begin{proof}
If $G$ has a vertex of degree at least three, then it contains $K_{3,1}$ as a subgraph. Consequently, $G+C_3$ contains the subgraph $K_{3,1,1,1,1}$ which is not 1-planar. 
\end{proof}

To deal with the remaining possibilities, we color the edges and vertices of $G$ with blue, the edges and vertices of $H$ with red, and the other edges of $G+H$ (the edges which join vertices of $G$ with vertices of $H$) with black. Clearly, the black edges induce a complete bipartite graph $K_{5,3}$ whose crossing number is four, see \cite{Kleitman}. This means that, in any drawing of $G+H$, at least four crossings are incident only with black edges.
Let $e_1,e_2$ be two black edges which cross each other in $D$, and let $v$ be their crossing.

\begin{lemma}\label{l:p4p1c3}
$(P_4 \cup P_1)+C_3$ is not 1-planar.
\end{lemma}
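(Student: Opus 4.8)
The plan is to assume for contradiction that $(P_4 \cup P_1)+C_3$ is 1-planar and fix a crossing-minimal 1-planar drawing $D$. First I would set up the edge-count bookkeeping exactly as in Lemma~\ref{l:tri}: here $|V(G)|+|V(H)| = 5+3 = 8$, so $D^\times$ has $8+c$ vertices, and $|E(G)|+|E(H)|+mn+2c = 3+3+15+2c = 21+2c$ edges. Since $G+H$ is 3-connected, so is $D^\times$, and no two false vertices are adjacent. I would then try to extend $D^\times$ to a plane triangulation on $8+c$ vertices, which has $18+3(8+c)-6\cdot\ldots$ — more precisely $3(8+c)-6 = 18+3c$ edges — by adding $18+3c-(21+2c) = c-3$ edges among true vertices only (no new multiple edges, by 3-connectivity). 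This requires knowing a good lower bound on $c$; I would invoke a known crossing-number result for this join (analogous to the use of $cr((C_3\cup P_1)+4P_1)=6$ in Lemma~\ref{l:tri}), presumably $cr((P_4\cup P_1)+C_3)\ge 4$ coming from $cr(K_{5,3})=4$, or a sharper value from the cited crossing-number papers, to guarantee $c-3 \ge 1$ so at least one edge can be added.

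The core of the argument is then a case analysis on where the $c-3$ added edges go. If all added edges join vertices inside the $C_3$-factor, we obtain a 1-planar drawing of $(P_4\cup P_1)+K_3'$ for some supergraph $K_3'$ of $C_3$ on those three vertices — but $C_3$ is already complete on three vertices, so this case forces an added edge to instead create a multigraph or go into the other factor; I would argue that no edge can be added purely within $H=C_3$, hence at least one added edge joins two vertices of $G = P_4\cup P_1$. Adding any edge to $P_4\cup P_1$ on five vertices either produces a vertex of degree three (if it touches an internal vertex of $P_4$ or connects to the isolated $P_1$ in a way that raises a degree), or produces $C_4$, $C_5$, $C_3\cup P_2$, etc. The target contradiction is a subgraph on the $G$-side of maximum degree $\ge 3$: since $n = 3$, Lemma~\ref{l:c3g} (or rather its underlying fact that $K_{3,1,1,1,1}$ is not 1-planar) kills any configuration where the augmented $G$ has a degree-three vertex, because then $G'+C_3 \supseteq K_{3,1}+C_3 = K_{3,1,1,1,1}$.

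The main obstacle I anticipate is the sub-case where the added edge does \emph{not} create a degree-three vertex in $G$ — namely joining the two endpoints of $P_4$ to get $C_4$, or joining $P_1$ to an endpoint of $P_4$ to get $P_5$ (still max degree $2$). In that situation the simple "degree-three subgraph" trick fails, and I would instead need to push the triangulation argument further: either more added edges are available (if $c$ is strictly larger than $4$), eventually forcing a degree-three vertex or a forbidden complete multipartite subgraph from Table~\ref{tab:1p}; or I would appeal directly to the exact crossing number of $(P_4\cup P_1)+C_3$ from the literature being large enough (i.e.\ $\ge 5$) that $c-3\ge 2$, so that after the unavoidable first "harmless" edge a second added edge is forced and \emph{that} one must create a degree-three vertex or a $K_{2,1,1}$ inside $G$, yielding $K_{2,1,1}+C_3 \supseteq K_{3,2,1,1}$ — wait, that is 1-planar, so more care is needed: the precise forbidden subgraph to aim for is $K_{3,1,1,1,1}$ again, reached as soon as the $G$-side contains $K_{3,1}$, i.e.\ a degree-three vertex, plus a $P_1$. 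I would therefore structure the endgame to show that with $c-3$ edges added among only five true vertices forming $P_4\cup P_1$, one cannot avoid creating a degree-three vertex unless $c$ is small, and a direct consultation of the known value $cr((P_4\cup P_1)+C_3)$ (strictly exceeding $3$, likely equal to $4$ or more) closes the remaining gap; if even $c=4$ leaves a consistent "add one harmless edge to get $C_4$" scenario, then a separate ad hoc examination of the 1-planar drawings of $C_4 + C_3 = K_{2,2,1,1,1}$ — which \emph{is} 1-planar — would be required, meaning the real contradiction must come not from a forbidden subgraph but from the crossing-count/Euler-type inequality for $D^\times$ itself, and I would fall back on counting false faces and edges in $D^\times$ to derive the impossibility.
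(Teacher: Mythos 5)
There is a genuine gap in your endgame, and you have in fact located it yourself without closing it. Your plan transplants the counting argument of Lemma~\ref{l:tri}: extend $D^\times$ to a triangulation by adding $18+3c-(21+2c)=c-3$ edges among true vertices (all of which must be blue--blue, since $C_3$ is complete and all blue--red edges are present), and then derive a contradiction from what the added edges do to $P_4\cup P_1$. But the only lower bound you actually have is $c\ge cr(K_{5,3})=4$, i.e.\ a single guaranteed added edge, and the ``harmless'' additions (joining the two ends of $P_4$ to get $C_4\cup P_1$, or joining the isolated vertex to an end of $P_4$ to get $P_5$; with two edges one can still get $C_5$) keep the maximum degree at two, so no forbidden complete multipartite subgraph such as $K_{3,1,1,1,1}$ appears. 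Worse, you cannot rescue this by ``consulting'' the non-1-planarity of the augmented joins: each of $(C_4\cup P_1)+C_3$, $P_5+C_3$, $C_5+C_3$ contains $(P_4\cup P_1)+C_3$ as a subgraph, so appealing to their non-1-planarity is circular, and your proposed ad hoc check of $C_4+C_3=K_{2,2,1,1,1}$ misidentifies the graph that actually arises (it is $(C_4\cup P_1)+C_3$, not $C_4+C_3$). To force a degree-three blue vertex you would need at least three added edges, i.e.\ $c\ge 6$, a bound you neither prove nor correctly cite; the closing remark about ``counting false faces and edges'' is not an argument. So the proposal, as it stands, does not prove the lemma.

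For contrast, the paper avoids triangulation-extension here entirely and argues directly in the drawing $D$: every crossing of two black edges spans, together with its two red endvertices, a false 3-face of $D^\times$ (otherwise one could reduce the number of crossings or exhibit a 2-cut). Since there are at least four black--black crossings but only three red edges, some red edge is flanked by two such false 3-faces, which forces at least two blue vertices inside and at least two outside the red triangle; with five blue vertices one may assume exactly two are inside, whence only one crossing lies inside the red $C_3$, the remaining black--black crossings lie outside, and no red edge is crossed at all. An uncrossed red triangle separating the blue vertices $2$--$3$ cannot accommodate a blue $P_4$, giving the contradiction. This drawing-based separation argument is exactly what replaces the missing counting leverage in your approach; if you want to keep your route, you would first have to establish $c\ge 6$ for this particular join, which is not available from $cr(K_{5,3})$ alone.
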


\begin{proof}
Assume that $(P_4 \cup P_1)+C_3$ is 1-planar.
The red endvertices of $e_1,e_2$ and the false vertex $v$ form a false 3-face in $D^\times$ since otherwise we can decrease the number of crossings in $D$ or these red vertices would form a 2-vertex-cut in a 3-connected graph $D^\times$. 

There are three red edges in $D$ and at least four crossings are incident only with black edges, hence at least one red edge is incident with two false 3-faces in $D^\times$. Consequently, there are at least two blue vertices inside the red cycle $C_3$ and other two vertices outside the red one in $D$. There are five blue vertices, hence, without loss of generality, we can assume that there are exactly two blue vertices inside the red $C_3$. Therefore, there is only one crossing inside the red $C_3$. So, we obtain that the remaining  crossings (at least three), which are incident only with black edges, are outside the red $C_3$. Consequently, no red edge is crossed by any other edge in $D$. 

Two blue vertices are inside the red $C_3$, three blue vertices are outside the red $C_3$ and no red edge is crossed. Thus $(P_4 \cup P_1)+C_3$ cannot contain any blue $P_4$, a contradiction.  
\end{proof}

\begin{lemma}\label{l:p5p3}
$(P_4 \cup P_1)+P_3$ is not 1-planar.
\end{lemma}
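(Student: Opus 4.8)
The plan is to assume $(P_4\cup P_1)+P_3$ is 1-planar and derive a contradiction by the triangulation technique of Lemma~\ref{l:tri}, sharpened by a local analysis around the central vertex of $P_3$. Keep the colouring convention: the edges of $P_4\cup P_1$ (on $g_1g_2g_3g_4$, $g_5$) are blue, the edges $h_1h_2,h_2h_3$ of $P_3$ are red, and the remaining edges, which form a $K_{5,3}$, are black; since $cr(K_{5,3})=4$, every drawing has at least four crossings of two black edges. Fix a drawing $D$ with the minimum number $c$ of crossings, so $c\ge 4$, and recall $D^{\times}$ is $3$-connected. As in Lemma~\ref{l:tri}, extend $D^{\times}$ to a plane triangulation $T$ by adding $c-2\ge 2$ edges that join only true vertices and create no multiple edges (otherwise their endpoints would be a $2$-cut of the $3$-connected $D^{\times}$). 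No added edge is black, since every black edge $g_ih_j$ is already present; and the $h$-vertices have only one non-edge, $h_1h_3$, so at most one added edge is red.

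The first thing to settle is the case where $h_1h_3$ is added. Drawing the new edges inside faces of $D^{\times}$ introduces no crossings, so we would obtain a 1-planar drawing of a graph containing $(P_4\cup P_1)+C_3$, which is impossible by Lemma~\ref{l:p4p1c3}. Hence all added edges are blue; set $G'$ to be $P_4\cup P_1$ together with these blue edges, so $T$ is the associated plane graph of a 1-planar drawing of $G'+P_3$ with $c$ crossings, and $T$ is a triangulation.

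The core step is the local analysis of a black–black crossing. Let $v$ be the crossing of black edges $g_ah_b$ and $g_ch_d$ (so $a\ne c$, $b\ne d$). In $T$ the false vertex $v$ has degree $4$, and the four parts of the two crossing edges alternate around $v$, so its neighbours in cyclic order are $g_a,X,h_b,Y$ with $\{X,Y\}=\{g_c,h_d\}$. Since $T$ is a triangulation, consecutive neighbours of $v$ are adjacent; in particular $h_b$ is adjacent to both $X$ and $Y$, hence $h_bh_d\in E(T)$. As $h_b,h_d$ are true red vertices and no red edge was added, $h_bh_d\in E(P_3)$, so $h_2\in\{h_b,h_d\}$. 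Therefore every one of the $\ge 4$ black–black crossings is incident with exactly one black edge at $h_2$ (the two black edges at $h_2$ meeting at a crossing would be adjacent); since each edge is crossed at most once, distinct black–black crossings use distinct such edges, so at least four of the five edges $g_1h_2,\dots,g_5h_2$ are crossed in $D$. To finish, $h_2$ has degree $2+5=7$ in $G'+P_3$ and in $T$, so in the $3$-connected triangulation $T$ the neighbourhood of $h_2$ is a $7$-cycle; each crossed edge $g_ih_2$ is subdivided in $T$ by its false vertex, which is a neighbour of $h_2$, so this $7$-cycle contains at least four false vertices. But false vertices are pairwise non-adjacent while a $7$-cycle has no four pairwise non-adjacent vertices — a contradiction, so $(P_4\cup P_1)+P_3$ is not 1-planar.

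The steps I expect to need the most care are the two reductions already implicit in Lemma~\ref{l:tri} and in the displayed argument: that $D^{\times}$ can be triangulated using true–true edges only without creating parallel edges (which uses $3$-connectivity and the non-adjacency of false vertices), and the claim that the triangle condition at a degree-$4$ false vertex forces the red edge $h_bh_d$. The latter is exactly why the separate treatment of the added edge $h_1h_3$ is unavoidable: a crossing of $g_ah_1$ with $g_ch_3$ is precisely the configuration whose local triangulation demands the non-edge $h_1h_3$, and that case is absorbed by Lemma~\ref{l:p4p1c3}. Everything else is routine bookkeeping.
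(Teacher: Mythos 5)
Your proof is correct and reaches the paper's final contradiction by the same endgame (the central vertex $h_2$ has degree seven in the triangulation $T$, its link is a $7$-cycle, and such a cycle cannot contain four pairwise non-adjacent false vertices), but the middle of the argument is genuinely different. The paper works in $D^{\times}$ before triangulating: it first forces the red endvertices of any two crossing black edges to be adjacent (otherwise the missing red edge could be added, giving a 1-planar drawing of $(P_4\cup P_1)+C_3$, contradicting Lemma~\ref{l:p4p1c3}), then uses crossing-minimality and 3-connectivity to show that these two red vertices together with the false vertex bound a false 3-face, and counts four such 3-faces along the two red edges to conclude that $h_2$ has at least four false neighbours. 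You instead triangulate first and extract the same conclusion from the triangle condition at each degree-four false vertex (consecutive neighbours of $v$ in $T$ are adjacent, forcing $h_bh_d\in E(T)$ and hence $h_2\in\{h_b,h_d\}$), absorbing the $h_1h_3$ possibility into the case analysis of the added edges; this buys you a proof that avoids the crossing-minimality/false-3-face step entirely and replaces it with a purely combinatorial statement about $T$. One point you should make explicit (the paper is equally silent on it): an added edge of $T$ only needs to join vertices non-adjacent in $D^{\times}$, and the endvertices of a \emph{crossed} edge of the graph are non-adjacent in $D^{\times}$, so a priori an added edge could duplicate a crossed black edge or a crossed edge $h_1h_2$ or $h_2h_3$, which would spoil your claims that all added edges are blue and that $\deg_T(h_2)=7$. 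This is repaired by the crossing-minimality you already assumed: if two true vertices adjacent in $(P_4\cup P_1)+P_3$ lay on a common face of $D^{\times}$, the edge between them could be rerouted through that face, decreasing the number of crossings; hence no added edge duplicates a graph edge and your accounting stands (your local step forcing $h_2\in\{h_b,h_d\}$ is unaffected in any case, since it only uses that $h_1h_3$ was not added).
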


\begin{proof}
Assume that $(P_4 \cup P_1)+P_3$ is 1-planar.
The red endvertex of $e_1$ is joined with the red endvertex of $e_2$ by a red edge in $D$ since otherwise, by adding this missing edge, we obtain a 1-planar drawing of the graph $(P_4 \cup P_1)+C_3$ which is not 1-planar. The red endvertices of $e_1,e_2$ and the false vertex $v$ form a false 3-face in $D^\times$.

There are two red edges in $D$ and at least four crossings are incident only with black edges, hence these red edges are incident with four false 3-faces in $D^\times$, see Figure \ref{fig:fig}. Consequently, the central vertex of the red $P_3$ is adjacent to at least four false vertices in $D^\times$.

\begin{figure}
\centerline{
\includegraphics{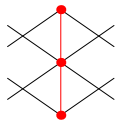}
}
\caption{The edges of $P_3$ are incident with four false 3-faces.}
\label{fig:fig}
\end{figure}

Now we extend $D^\times$ to a triangulation $T$ by adding some edges which join only true vertices. In this triangulation, the central vertex $x$ of the red $P_3$ has degree seven. The fact that $T$ is a triangulation implies that there is a 7-cycle in $T$ which contains all vertices adjacent to $x$. No false vertices are adjacent in $T$, hence this 7-cycle contains at most three false vertices. This means that $x$ is adjacent to at most three false vertices in $D^\times$, a contradiction.   
\end{proof}

\begin{lemma}\label{l:p5p2p1}
$(P_4 \cup P_1)+(P_2 \cup P_1)$ is not 1-planar.
\end{lemma}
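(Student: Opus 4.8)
The plan is to reuse the machinery of Lemmas~\ref{l:p4p1c3} and~\ref{l:p5p3}. Suppose $(P_4\cup P_1)+(P_2\cup P_1)$ is 1-planar and let $D$ be a drawing with the fewest crossings; then $D^{\times}$ is $3$-connected, as noted earlier. Write $ab$ for the unique edge of the red factor $P_2\cup P_1$ and let $d$ be its isolated vertex. Since the black edges span $K_{5,3}$ and $cr(K_{5,3})=4$, at least four crossings of $D$ are incident only with black edges. Fix such a crossing $v$, coming from black edges $x_1y_1,x_2y_2$ with $x_1,x_2$ blue and $y_1,y_2$ red (so $x_1\neq x_2$ and $y_1\neq y_2$). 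Because $x_1y_1$ and $x_2y_2$ cross at $v$, the arcs $vy_1,vy_2$ are consecutive in the rotation around $v$ in $D^{\times}$; let $f$ be the face of $D^{\times}$ in the corner between them, so that $y_1,v,y_2$ lie consecutively on the boundary of $f$.

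The first step is to show $\{y_1,y_2\}=\{a,b\}$ for each such crossing. If $y_1y_2\notin E(P_2\cup P_1)$, then, since $y_1$ and $y_2$ both lie on $\partial f$, we may draw a new edge $y_1y_2$ as an arc inside $f$; it crosses nothing, so we obtain a 1-planar drawing of $(P_4\cup P_1)+H'$, where $H'$ is $P_2\cup P_1$ with the edge $y_1y_2$ added. The two non-edges of $P_2\cup P_1$ are $ad$ and $bd$, and adding either one makes $H'$ a copy of $P_3$; thus $(P_4\cup P_1)+P_3$ would be 1-planar, contradicting Lemma~\ref{l:p5p3}. Hence $\{y_1,y_2\}=\{a,b\}$, so $f$ is the face in the corner at $v$ between $va$ and $vb$. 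Exactly as in Lemma~\ref{l:p4p1c3}, $f$ must then be the false $3$-face with vertex set $\{v,a,b\}$: otherwise $a$ and $b$ would lie on a common face distinct from the (at most two) faces bounded by the red edge $ab$, and then either $ab$ could be rerouted through $f$ to lower the number of crossings, or $\{a,b\}$ would be a $2$-vertex-cut of the $3$-connected graph $D^{\times}$ --- both impossible.

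Combining these, each of the at least four crossings of $D$ that are incident only with black edges yields a false $3$-face of $D^{\times}$ having the red edge $ab$ on its boundary, and these $3$-faces are pairwise distinct since they contain pairwise distinct false vertices. But an edge of a plane graph lies on the boundary of at most two of its faces, so there can be at most two such crossings --- a contradiction, which proves the lemma. I expect the main obstacle to be making the ``$f$ is a $3$-face'' step watertight (in particular the sub-case in which $ab$ is itself crossed, where one must first reroute $ab$ into an uncrossed position through $f$ so as to lower the crossing count), but this is precisely the reroute-or-$2$-cut dichotomy already used in the proof of Lemma~\ref{l:p4p1c3} and it transfers without change.
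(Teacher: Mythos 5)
Your proposal is correct and follows essentially the same route as the paper: reduce to Lemma~\ref{l:p5p3} by inserting the missing red edge in the face at each black--black crossing, conclude that each such crossing gives a false 3-face containing the unique red edge, and derive a contradiction because one edge can bound at most two faces while at least four such 3-faces are required. Your write-up merely makes explicit the reroute-or-2-cut step and the face-counting that the paper leaves implicit.
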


\begin{proof}
Assume that $(P_4 \cup P_1)+(P_2 \cup P_1)$ is 1-planar.
The red endvertex of $e_1$ is joined with the red endvertex of $e_2$ by a red edge in $D$ since otherwise, by adding this missing edge, we obtain a 1-planar drawing of $(P_4 \cup P_1)+P_3$. The red endvertices of $e_1,e_2$ and the false vertex $v$ form a false 3-face in $D^\times$. For four such 3-faces, we need at least two red edges, but $(P_4 \cup P_1)+(P_2 \cup P_1)$ contains only one red edge, a contradiction.
\end{proof}

\begin{lemma}\label{l:p4p1p1p1p1}
$(P_4 \cup P_1)+3P_1$ is not 1-planar.
\end{lemma}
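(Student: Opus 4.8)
The plan is to mimic the proofs of Lemmas~\ref{l:p4p1c3}--\ref{l:p5p2p1}, exploiting that the red factor $3P_1$ has \emph{no} edges at all, which makes the ``add the missing red edge'' step automatic. So I would assume $(P_4 \cup P_1) + 3P_1$ is 1-planar and keep the notation fixed above: let $D$ be a 1-planar drawing with the minimum number of crossings, let $D^\times$ be its associated plane graph (which is $3$-connected, as already observed), and let $e_1=b_1r_1$, $e_2=b_2r_2$ be two black edges crossing at the false vertex $v$ --- such a pair exists because the black edges span $K_{5,3}$ and $cr(K_{5,3})=4$. Since adjacent edges do not cross, $r_1$ and $r_2$ are two distinct red vertices.

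Next I would reuse the local structural observation already used in Lemma~\ref{l:p4p1c3}: the face of $D^\times$ incident to $v$ and lying between the arcs $\stackrel{\frown}{vr_1}$ and $\stackrel{\frown}{vr_2}$ must be a false $3$-face --- otherwise one could reroute an edge through this region to decrease the number of crossings in $D$, or $\{r_1,r_2\}$ would be a $2$-vertex-cut of the $3$-connected graph $D^\times$, both impossible. The third side of this $3$-face is an edge joining the two red vertices $r_1$ and $r_2$; but $H=3P_1$ has no edges, so $r_1r_2 \notin E((P_4\cup P_1)+3P_1)$.

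Hence I can draw the edge $r_1r_2$ inside that $3$-face without creating any crossing, obtaining a 1-planar drawing of $(P_4\cup P_1)+(P_2\cup P_1)$, which contradicts Lemma~\ref{l:p5p2p1}. (Equivalently, the whole argument can be phrased as: any 1-planar drawing of $(P_4\cup P_1)+3P_1$ can be extended, by this surgery at a black--black crossing, to a 1-planar drawing of $(P_4\cup P_1)+(P_2\cup P_1)$, which does not exist.) The only delicate point is the justification that the region at $v$ really is a triangular face; this is precisely the reasoning already carried out in the preceding lemmas, and once it is granted the conclusion is immediate here, since $3P_1$ contributes no red edge that could already occupy the third side of that triangle.
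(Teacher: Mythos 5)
Your reduction is exactly the one the paper intends: its proof of Lemma~\ref{l:p4p1p1p1p1} is literally ``proceed as in Lemma~\ref{l:p5p2p1}'', i.e.\ at a black--black crossing the two red endvertices would have to be joined by a red edge, and since $3P_1$ has no edges, the missing edge $r_1r_2$ can be added to yield a 1-planar drawing of $(P_4\cup P_1)+(P_2\cup P_1)$, contradicting Lemma~\ref{l:p5p2p1}. So your target graph, your use of $cr(K_{5,3})=4$ to get a black--black crossing, and your final contradiction all coincide with the paper's argument.

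One intermediate claim in your write-up is, however, wrong as stated (though fortunately not needed). The face of $D^\times$ at $v$ lying between the arcs toward $r_1$ and $r_2$ \emph{cannot} be a false 3-face here: its third side would have to be the true edge $r_1r_2$, which is not an edge of $(P_4\cup P_1)+3P_1$. Moreover, the ``reroute to save a crossing or get a 2-cut'' justification you import from Lemma~\ref{l:p4p1c3} presupposes that the red edge $r_1r_2$ exists in the graph (it is that edge which gets rerouted), so it cannot be used to force a triangle in the present situation; your closing remark that the ``only delicate point'' is establishing the triangular face is therefore aiming at something that is both unprovable and unnecessary. What you actually need is only that $r_1$ and $r_2$ lie on the boundary of a common face of $D^\times$, namely the face in the quadrant at $v$ between the edges $vr_1$ and $vr_2$ (these are consecutive in the rotation at $v$, and the half-edges of $e_1,e_2$ carry no further crossings), so the new edge $r_1r_2$ can be drawn inside that face, or equivalently alongside the arcs $r_1v$ and $vr_2$, without creating any crossing; no multiedge arises since $r_1r_2$ was absent. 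This is precisely the tacit step ``by adding this missing edge'' in the paper's proof of Lemma~\ref{l:p5p2p1}. With that small repair (drop the 3-face claim), your proof is complete and matches the paper's.
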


\begin{proof}
We can proceed similarly as in the proof of Lemma \ref{l:p5p2p1} using the fact that $(P_4 \cup P_1)+(P_2 \cup P_1)$ is not 1-planar.
\end{proof}

\begin{lemma}\label{l:k31p1p1}
$(K_{3,1} \cup P_1)+3P_1$ is not 1-planar.
\end{lemma}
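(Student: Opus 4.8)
The plan is to mimic the proofs of Lemmas~\ref{l:p4p1c3}--\ref{l:p4p1p1p1p1}. Suppose $(K_{3,1}\cup P_1)+3P_1$ is 1-planar and let $D$ be a drawing with the fewest crossings, so that $D^{\times}$ is 3-connected. The black edges of $D$ form a drawing of $K_{5,3}$, and since $cr(K_{5,3})=4$ \cite{Kleitman}, at least four crossings of $D$ are incident only with black edges; in particular $D$ contains a crossing $v$ of two black edges $e_1,e_2$. Let $a_1,a_2\in V(H)$ be the endvertices of $e_1,e_2$ that lie in $H$; since adjacent edges do not cross, $a_1\neq a_2$.

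Next I would apply the structural fact used inside the proof of Lemma~\ref{l:p4p1c3}: the vertices $a_1,a_2$ and the false vertex $v$ bound a false 3-face of $D^{\times}$, for otherwise one could either redraw $e_1$ or $e_2$ and decrease the number of crossings in $D$ (contrary to the choice of $D$), or $\{a_1,a_2\}$ would be a 2-vertex-cut of the 3-connected graph $D^{\times}$. The boundary of such a 3-face contains the edge $a_1a_2$, and as $a_1,a_2$ are true vertices this gives $a_1a_2\in E(G+H)$, hence $a_1a_2\in E(H)$ because both its endvertices lie in $V(H)$. But $H=3P_1$ has no edges, a contradiction. Therefore $D$ has no crossing of two black edges, contradicting the existence of (at least four) such crossings, and so $(K_{3,1}\cup P_1)+3P_1$ is not 1-planar.

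The one step carrying all the weight is the claim that the two $H$-endvertices of a crossing pair of black edges, together with the crossing point, bound a false 3-face; this is exactly the device behind Lemmas~\ref{l:p4p1c3}--\ref{l:p5p2p1} and it transfers here without change. What is special to the present lemma is only that the conclusion is now used ``negatively'': as $3P_1$ is edgeless, the forced 3-face cannot exist, so no two black edges may cross at all, whereas $cr(K_{5,3})=4$ forces four such crossings. (Equivalently: since $a_1$ and $a_2$ are non-adjacent in $H$, the edge $a_1a_2$ could be drawn without any crossing inside the face of $D^{\times}$ incident with the angle $a_1va_2$ at $v$, producing a 1-planar drawing of $(K_{3,1}\cup P_1)+(P_2\cup P_1)$, which is excluded by running the same argument once more.)
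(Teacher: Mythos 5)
Your key step fails. The device from Lemmas~\ref{l:p4p1c3}--\ref{l:p5p2p1} that you invoke --- ``the two red endvertices $a_1,a_2$ of crossing black edges together with the crossing $v$ bound a false 3-face, else one can decrease crossings or $\{a_1,a_2\}$ is a 2-cut'' --- is only valid when $a_1a_2$ is actually an edge of the graph: the alleged 3-face is bounded by the two false half-edges $a_1v$, $va_2$ and the \emph{true} edge $a_1a_2$, and the minimality/connectivity dichotomy is obtained by redrawing that existing edge along the corner at $v$. That is why the paper asserts the 3-face directly only for $H=C_3$ (Lemma~\ref{l:p4p1c3}), while in Lemmas~\ref{l:p5p3} and \ref{l:p5p2p1} it first \emph{proves} that $a_1a_2$ must be a red edge, by the separate argument that otherwise the missing edge could be added crossing-free, yielding a 1-planar drawing of a join already shown not to be 1-planar. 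When $a_1a_2$ is not an edge, as is forced here since $3P_1$ is edgeless, no contradiction with minimality or 3-connectivity arises; the corner face at $v$ is simply a larger face. Indeed, your claim as stated would ``prove'' that no crossing-minimal 1-planar drawing of a 3-connected join with edgeless second factor can have two black edges crossing, and hence that $K_{5,3}=5P_1+3P_1$ is not 1-planar --- but $K_{5,3}$ is 1-planar (Table~\ref{tab:1p}) and every its drawing has at least four black--black crossings. So the dichotomy you rely on is false in this setting.

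Your parenthetical fallback is the right first move (it is exactly the paper's device): adding $a_1a_2$ along the corner gives a 1-planar drawing of $(K_{3,1}\cup P_1)+(P_2\cup P_1)$. But ``running the same argument once more'' does not finish it, because after the addition $H$ has an edge and the red endvertices of a black--black crossing may be precisely its endpoints, so the edge-addition device yields nothing; one would have to establish non-1-planarity of $(K_{3,1}\cup P_1)+(P_2\cup P_1)$ and, before that, of $(K_{3,1}\cup P_1)+P_3$, by counting arguments analogous to Lemmas~\ref{l:p5p3} and \ref{l:p5p2p1} (ultimately grounded in Lemma~\ref{l:c3g} for $(K_{3,1}\cup P_1)+C_3$). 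None of this is carried out, so the proposal has a genuine gap. For comparison, the paper's actual proof avoids this chain entirely: it counts edges of $D^\times$, extends it to a triangulation by adding $c\ge 4$ edges between true vertices, observes that $3P_1$ can absorb at most three of them, so some added edge lies inside $K_{3,1}\cup P_1$, which creates a vertex of degree four or a $P_4\cup P_1$ and contradicts Lemma~\ref{l:g4+} or Lemma~\ref{l:p4p1p1p1p1}.
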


\begin{proof}
Assume that $(K_{3,1} \cup P_1)+3P_1$ is 1-planar. Then the graph $D^\times$ has $8+c$ vertices and $18+2c$ edges. 

Any plane triangulation on $8+c$ vertices contains $18+3c$ edges, therefore $D^\times$ is not a triangulation. No two false vertices are adjacent in $D^\times$, hence we can extend $D^\times$ to a triangulation $T$ by adding $c\ge4$ (the graph $(K_{3,1} \cup P_1)+3P_1$ contains $K_{5,3}$ as a subgraph, hence $c \ge cr(K_{5,3})=4$) edges which join only true vertices. No multiple edges arise, since otherwise the corresponding endvertices form a 2-vertex-cut in $D^\times$. Clearly, at least one added edge joins two vertices of $K_{3,1} \cup P_1$. The graph $K_{3,1} \cup P_1$ with an extra edge has a vertex of degree four or it contains $P_4 \cup P_1$ as a subgraph, therefore the triangulation $T$ cannot be an associated plane graph of any 1-planar graph (see Lemmas \ref{l:g4+} and \ref{l:p4p1p1p1p1}), a contradiction. 
\end{proof}

\begin{lemma}\label{l:c3p2c3}
$(C_3 \cup P_2)+C_3$ is 1-planar.
\end{lemma}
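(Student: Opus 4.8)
The statement is positive, so the plan is to exhibit an explicit 1-planar drawing of $(C_3\cup P_2)+C_3$ (most conveniently presented as a figure). The conceptual starting point is to split the join along its common factor. Write $C_3\cup P_2$ as a triangle $a_1a_2a_3$ together with an independent edge $b_1b_2$, and $C_3$ as the triangle $c_1c_2c_3$. Since $C_3+C_3=K_6$ and $P_2+C_3=K_5$, and since there are no $a_ib_j$ edges, the graph $(C_3\cup P_2)+C_3$ is the union of a copy of $K_6$ on $\{a_1,a_2,a_3,c_1,c_2,c_3\}$ and a copy of $K_5$ on $\{b_1,b_2,c_1,c_2,c_3\}$, these two complete graphs overlapping in exactly the triangle $c_1c_2c_3$. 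Hence it suffices to draw the $K_6$-part and the $K_5$-part in complementary regions that meet only along the uncrossed triangle $c_1c_2c_3$.

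Concretely, I would first draw $K_6$ on $\{a_1,a_2,a_3,c_1,c_2,c_3\}$ so that the triangle $c_1c_2c_3$ is uncrossed and bounds an empty face $R$, with all of $a_1,a_2,a_3$ and the remaining twelve edges lying in the complementary region $R'$. Inside $R'$ one must realize $K_{3,1,1,1}$ (the triangle $a_1a_2a_3$ together with the nine edges $a_ic_j$) with $c_1,c_2,c_3$ on the boundary of $R'$. Placing $a_i$ close to $c_i$, drawing the small triangle $a_1a_2a_3$ crossing-free in the centre, joining each $a_i$ to its near vertex $c_i$ directly, and routing the six remaining edges in pairs so that $a_1c_2$ crosses $a_2c_1$, $a_1c_3$ crosses $a_3c_1$ and $a_2c_3$ crosses $a_3c_2$ gives such a drawing with every edge crossed at most once. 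Then I would fill the empty region $R$ with the $K_5$-part: put $b_1$ inside $R$ joined to $c_1,c_2,c_3$ (splitting $R$ into three triangular cells), put $b_2$ into the cell bounded by $b_1,c_1,c_2$ and join it to $b_1,c_1,c_2$, and finally route the edge $b_2c_3$ out of that cell across a single tripod edge, say $b_1c_1$.

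It then remains only to check that every edge is crossed at most once, which is immediate from the construction: inside $R'$ the pairing above is the only source of crossings and the triangle $a_1a_2a_3$ stays crossing-free; inside $R$ the unique crossing is $b_2c_3\times b_1c_1$; and no edge of the $K_6$-part crosses an edge of the $K_5$-part because the latter lives entirely in $R$, which is sealed off by the uncrossed triangle $c_1c_2c_3$. As a consistency check, the drawing has exactly four crossings, matching the lower bound $cr\bigl((C_3\cup P_2)+C_3\bigr)\ge cr(K_{5,3})=4$. The only step requiring genuine care is the first one — verifying that $K_6$ really admits a 1-planar drawing in which $c_1c_2c_3$ is both uncrossed and facial; once that picture is fixed, everything else is routine bookkeeping best handed off to the figure.
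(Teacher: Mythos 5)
Your construction is correct: decomposing $(C_3\cup P_2)+C_3$ as a $K_6$ on $\{a_1,a_2,a_3,c_1,c_2,c_3\}$ and a $K_5$ on $\{b_1,b_2,c_1,c_2,c_3\}$ glued along the triangle $c_1c_2c_3$ is exactly right, and the nested-triangles drawing of $K_6$ (radial edges $a_ic_i$ uncrossed, the six remaining $a_ic_j$ edges paired into three crossings, outer triangle $c_1c_2c_3$ uncrossed and facial) does exist, so the $K_5$-part can indeed be placed in the empty region $R$ with the single crossing $b_2c_3\times b_1c_1$; all crossing pairs are non-adjacent edges and no edge is crossed twice, so the resulting drawing is a valid 1-planar drawing with four crossings. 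However, your route differs from the paper's: the paper does not draw $(C_3\cup P_2)+C_3$ at all, but observes that it is a subgraph of $(C_3\cup C_3)+C_3$ and points to an explicit 1-planar drawing of that larger graph (its Figure~\ref{fig:K36+}), which is needed anyway later for the case $m=6$, $n=3$; thus one figure settles both lemmas at once. Your approach buys a self-contained, verifiable construction that also happens to be crossing-optimal (four crossings, matching $cr(K_{5,3})=4$), at the cost of an argument that does not immediately yield the stronger statement about $(C_3\cup C_3)+C_3$, which the paper's characterization ultimately requires. The one step you flag as delicate --- a 1-planar drawing of $K_6$ with a prescribed uncrossed facial triangle --- is genuine but standard, and your description of it is complete enough to check.
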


\begin{proof}
$(C_3 \cup P_2)+C_3$ is a subgraph of $(C_3 \cup C_3)+C_3$. Figure \ref{fig:K36+} shows a 1-planar drawing of $(C_3 \cup C_3)+C_3$.
\end{proof}

\begin{figure}
\centerline{
\includegraphics[angle=90]{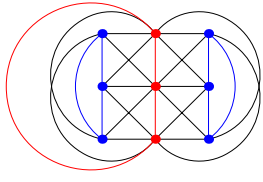}
}
\caption{A 1-planar drawing of $(C_3 \cup C_3)+C_3$.}
\label{fig:K36+}
\end{figure}

Finally, consider the cases with $m = 6$ and $n = 3$.

\begin{lemma}
If $G$ contains $P_4$ or $K_{3,1}$ as a subgraph, then $G+H$ is not 1-planar.
\end{lemma}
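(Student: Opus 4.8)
The plan is to reduce to the non-1-planarity results already established for the case $m=5$, $n=3$, exploiting the monotonicity of the join under subgraph containment: whenever $G'\subseteq G$ and $H'\subseteq H$ we have $G'+H'\subseteq G+H$, and since 1-planarity is closed under taking subgraphs, it suffices to exhibit inside $G+H$ a subgraph that is already known not to be 1-planar.

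First I would record the two relevant ``padded'' subgraphs available in this section, where $m=|V(G)|=6$ and $n=|V(H)|=3$. The graph $H$ always contains $3P_1$ as a spanning subgraph. The graph $G$, having six vertices, contains $K_{3,1}\cup P_1$ whenever it contains $K_{3,1}$, and contains $P_4\cup P_1$ whenever it contains $P_4$: in each case the prescribed four-vertex subgraph leaves at least two further vertices of $G$, so one spare isolated vertex is certainly available.

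Then I split into the two cases of the hypothesis. If $G$ contains $K_{3,1}$, then $G+H\supseteq (K_{3,1}\cup P_1)+3P_1$, which is not 1-planar by Lemma~\ref{l:k31p1p1}, so $G+H$ is not 1-planar. If $G$ contains $P_4$, then $G+H\supseteq (P_4\cup P_1)+3P_1$, which is not 1-planar by Lemma~\ref{l:p4p1p1p1p1}, so again $G+H$ is not 1-planar. In either case the conclusion follows.

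I do not expect a genuine obstacle here; the only point requiring care is the bookkeeping of vertex counts, namely verifying that $G$ on six vertices is large enough to supply the extra isolated vertex needed to match the statements of Lemmas~\ref{l:k31p1p1} and~\ref{l:p4p1p1p1p1}, and that $H$ on three vertices contains $3P_1$. Both are immediate from $m=6$ and $n=3$, so the proof is essentially a one-line application of monotonicity together with the earlier small-case lemmas.
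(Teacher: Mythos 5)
Your proof is correct and is essentially the paper's own argument: the paper likewise derives the statement directly from Lemmas~\ref{l:p4p1p1p1p1} and~\ref{l:k31p1p1}, with your subgraph-monotonicity bookkeeping being exactly the (implicit) justification. Nothing is missing.
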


\begin{proof}
It follows from Lemmas \ref{l:p4p1p1p1p1} and \ref{l:k31p1p1}.
\end{proof}

\begin{lemma}
$(C_3 \cup C_3)+C_3$ is 1-planar.
\end{lemma}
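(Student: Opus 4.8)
The plan is simply to exhibit a 1-planar drawing; one is displayed in Figure~\ref{fig:K36+}. I will indicate how such a drawing can be obtained in a structured way so that the verification is transparent. Write the vertex set of $C_3\cup C_3$ as $A\cup B$, where $A=\{a_1,a_2,a_3\}$ and $B=\{b_1,b_2,b_3\}$ span the two triangles, and let $H=\{u_1,u_2,u_3\}$ span the triangle of the last factor. Since $(C_3\cup C_3)+C_3$ has no edge between $A$ and $B$, the graph is exactly two copies of $K_6$, namely the subgraphs induced by $A\cup H$ and by $B\cup H$, identified along the common triangle on $H$.

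First I would build a 1-planar drawing of the copy of $K_6$ on $A\cup H$ in which the triangle $u_1u_2u_3$ bounds a face. Draw $u_1,u_2,u_3$ as an outer triangle and $a_1,a_2,a_3$ as a smaller concentric triangle, with $a_i$ facing $u_i$; route the three ``spokes'' $u_ia_i$ without crossings, and route each pair $u_ia_j,\ u_ja_i$ (for $i\neq j$) so that the two edges of the pair cross each other exactly once inside the annular region between the two triangles. This yields a 1-planar drawing of $K_6$ with $3$ crossings whose unbounded face is the true triangle on $H$.

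Next I would take a second copy of this drawing (mirror-reflected if needed) realising the $K_6$ on $B\cup H$, again arranged so that the triangle on $H$ bounds a face, and glue the two pictures along the $H$-triangle; equivalently, on the sphere the triangle on $H$ cuts off an empty triangular region in the first drawing, and the second drawing is placed inside it. The two pieces meet only along the uncrossed edges $u_1u_2,u_2u_3,u_3u_1$, so in the union every edge is still crossed at most once; hence $(C_3\cup C_3)+C_3$ is 1-planar. The resulting drawing has $6$ crossings, which is exactly the minimum forced by $|E(D^\times)|\le 3|V(D^\times)|-6$ for $9$ vertices and $27$ edges, so nothing is wasted.

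Since the argument amounts to producing and inspecting one explicit drawing, there is no genuine obstacle. The only point needing (easy) care is confirming that in the $K_6$ drawing above the triangle on $H$ really does bound a face of the associated plane graph, so that the second copy can be inserted without creating a new crossing, and that in that second copy the vertices of $H$ form its boundary triangle, which follows from the vertex-transitivity of $K_6$ after relabelling if necessary.
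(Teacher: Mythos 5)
Your proposal is correct and takes essentially the same route as the paper, which proves the lemma simply by exhibiting a 1-planar drawing (Figure~\ref{fig:K36+}). Your explicit construction --- viewing $(C_3\cup C_3)+C_3$ as two copies of $K_6$ sharing the triangle on $H$, drawing each $K_6$ with three crossings so that the $H$-triangle is uncrossed and bounds a face, and nesting the second copy inside that face --- is a valid, self-contained way to produce such a drawing, and your count showing $c\ge 6$ crossings are necessary is also right.
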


\begin{proof}
Figure \ref{fig:K36+} shows a  1-planar drawing of $(C_3 \cup C_3)+C_3$.
\end{proof}

\begin{lemma}
$3P_2+3P_1$ is not 1-planar.
\end{lemma}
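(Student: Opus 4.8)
The plan is to imitate the proofs of Lemmas~\ref{l:tri} and~\ref{l:k31p1p1}. Suppose $3P_2+3P_1$ is $1$-planar and let $D$ be a $1$-planar drawing with the minimum number $c$ of crossings, with associated plane graph $D^{\times}$. Since $|V(3P_2)|+|V(3P_1)|=9$ and $|E(3P_2)|+|E(3P_1)|+6\cdot 3=21$, the graph $D^{\times}$ has $9+c$ vertices and $21+2c$ edges. Because $3P_2+3P_1$ contains $K_{6,3}$ and $cr(K_{6,3})=6$ (see \cite{Kleitman}), I would record $c\ge 6$.

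Next I would compare with a triangulation: a plane triangulation on $9+c$ vertices has $3(9+c)-6=21+3c$ edges, so $D^{\times}$ is exactly $c$ edges short of being a triangulation. As in the cited lemmas, since no two false vertices are adjacent in $D^{\times}$, I would extend $D^{\times}$ to a plane triangulation $T$ by adding $c$ edges that join true vertices only; no parallel edges appear, for otherwise the two endvertices of such an edge would form a $2$-vertex-cut in the $3$-connected graph $D^{\times}$. Drawing these $c$ new, uncrossed edges into the corresponding faces of $D$ converts $D$ into a $1$-planar drawing whose associated plane graph is $T$; hence $T=F^{\times}$ for the graph $F$ obtained from $3P_2+3P_1$ by adding these $c$ edges.

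Then I would invoke that $3P_2+3P_1$ is a join: every edge of $F$ not present in $3P_2+3P_1$ is a non-edge of $3P_2+3P_1$, hence it joins two vertices of $3P_2$ or two vertices of $3P_1$, since all edges between the two factors are already there. At most $\binom{3}{2}=3$ of the $c$ added edges can join two vertices of $3P_1$, and $c\ge 6>3$, so at least one added edge $e$ joins two vertices of $3P_2$. As every vertex of $3P_2$ has degree one, $e$ links two of its components, so $3P_2$ with the edge $e$ contains a $P_4$, and hence contains $P_4\cup P_1$. Consequently $F\supseteq(P_4\cup P_1)+3P_1$, which is not $1$-planar by Lemma~\ref{l:p4p1p1p1p1}; this contradicts the fact that $F$ has a $1$-planar drawing.

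The only step needing any care is the completion of $D^{\times}$ to a triangulation using true--true edges only (and without parallel edges); but this is precisely the standard device already used in Lemmas~\ref{l:tri} and~\ref{l:k31p1p1}, resting on the absence of false--false adjacencies and on the $3$-connectivity of $D^{\times}$, so I would simply reuse it. Everything else is routine bookkeeping: the edge counts, the value $cr(K_{6,3})=6$, and the elementary fact that adding one edge to $3P_2$ creates a $P_4$.
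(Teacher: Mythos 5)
Your proposal is correct and follows essentially the same route as the paper: the edge count of $D^\times$, the bound $c\ge cr(K_{6,3})=6$, the extension to a triangulation by true--true edges (no parallel edges by 3-connectivity), and the observation that some added edge must lie inside $3P_2$, producing a supergraph of $(P_4\cup P_1)+3P_1$, which is not 1-planar by Lemma \ref{l:p4p1p1p1p1}. You merely spell out the final counting step ($c\ge 6>3$ possible edges inside $3P_1$) that the paper leaves implicit.
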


\begin{proof}
Assume that $3P_2+3P_1$ is 1-planar. 
$D^\times$ has $9+c$ vertices and $21+2c$ edges. 

$3P_2+3P_1$ contains $K_{6,3}$ as a subgraph, hence $c \ge cr(K_{6,3})=6$, see \cite{Kleitman}.

Any plane triangulation on $9+c$ vertices contains $21+3c$ edges, therefore $D^\times$ is not a triangulation ($21+2c<21+3c$).  
No two false vertices are adjacent in $D^\times$, hence we can extend $D^\times$ to a triangulation by adding $c\ge6$ edges which join only true vertices. No multiple edges arise, since otherwise the corresponding endvertices form a 2-vertex-cut in $D^\times$. By this way we obtain a 1-planar drawing of a supergraph of $(P_4 \cup P_2)+3P_1$, a contradiction.
\end{proof}

\section{The join $G+H$ with $|V(H)| \leq 2$}\label{sec:2-}

It is much more difficult to describe, in a "nice" way, for which graph $G$ is the join $G+P_1, G+2P_1$ or $G + P_2$ 1-planar. The cases with small $G$ are easy: if $|V(G)| \leq 4$, then the above mentioned joins are 1-planar (since they are subgraphs of $K_6$ which is 1-planar). In the general case with $G$ being larger, we start with easy necessary conditions:

\begin{lemma}
If $G+P_1$ is 1-planar, then $|E(G)|\le 3|V(G)|-4$.
\end{lemma}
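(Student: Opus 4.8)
The statement to prove is: if $G+P_1$ is 1-planar, then $|E(G)|\le 3|V(G)|-4$. The natural approach is an Euler-type edge-count argument applied to the associated plane graph $D^\times$ of a 1-planar drawing $D$ of $G+P_1$. Write $|V(G)|=m$, so $G+P_1$ has $m+1$ true vertices. If $D$ has $c$ crossings, then $D^\times$ is a simple plane graph on $m+1+c$ vertices with $|E(G)|+m+2c$ edges (the $m$ edges being those joining $V(G)$ to the apex vertex of $P_1$). By Euler's formula, a simple plane graph on $N$ vertices has at most $3N-6$ edges, so
\begin{equation*}
|E(G)|+m+2c \le 3(m+1+c)-6 = 3m+3c-3,
\end{equation*}
which rearranges to $|E(G)| \le 2m + c - 3$. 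This is not yet strong enough; I would need a bound of the form $c \le m-1$ to conclude $|E(G)|\le 3m-4$.

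**Bounding the number of crossings.** The key extra input is that each crossing in $D$ involves two edges, and I want to argue that crossings are "charged" to the apex vertex economically. The cleanest route: observe that in a 1-planar drawing, each crossing point, once it becomes a false vertex of degree four in $D^\times$, is surrounded by faces, and since no two false vertices are adjacent, every false vertex lies in at least... — actually the sharper and more standard fact is that $D^\times$ is a simple plane graph in which the false vertices form an independent set, which already gives a better bound. Alternatively, and more in the spirit of the paper, I would note that since every edge of $P_1$'s star $K_{1,m}$ can be crossed at most once and the crossing number / structure of the drawing forces the crossings incident with the apex star to be limited; but the simplest clean claim is: the number of crossings $c$ is at most the number of edges incident to the apex vertex in $D^\times$ that get subdivided, which is at most $m$, and a parity/independence refinement shaves this to $m-1$. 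I would look for the bound $c\le m-1$ by a direct argument: if all $m$ edges at the apex were crossed, one could redraw to remove a crossing (contradicting minimality of $D$), or the apex would be enclosed in a way producing a $2$-cut, contradicting $3$-connectivity of $D^\times$ (established earlier in the excerpt for joins with both factors of order $\ge 3$ — though here one factor is $P_1$, so I'd instead use minimality of $c$ directly).

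**The main obstacle.** The delicate point is getting the sharp constant $-4$ rather than $-3$: the plain Euler bound gives $|E(G)|\le 2m+c-3$, and I need to show $c\le m-1$. I expect this to be the crux. One workable argument: among all $1$-planar drawings of $G+P_1$, choose $D$ with $c$ minimum; then no edge is "uselessly" crossed. Each of the $m$ apex edges is crossed at most once, and each crossing on an apex edge is charged to that apex edge, so the number of crossings involving an apex edge is at most $m$. Crossings involving no apex edge are crossings among edges of $G$ alone, which live in the planar-drawable... no — $G$ need not be planar. So I'd instead bound everything through $D^\times$: since false vertices are independent in the simple plane graph $D^\times$ on $m+1+c$ vertices, a standard refinement of Euler's formula for plane graphs with an independent set of $c$ vertices all of degree $4$ gives $|E(D^\times)| \le 3(m+1+c)-6-c = 3m+2c-3$ (because each false vertex "wastes" one unit relative to the triangulation bound — its link is a $4$-cycle, not contractible to a triangle on that vertex). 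Then $|E(G)|+m+2c \le 3m+2c-3$, i.e. $|E(G)|\le 2m-3$, which is even stronger than claimed and in fact suspicious — so I would double-check whether false vertices really force that much slack, or whether the correct accounting only yields the claimed $3m-4$; the honest version of this step (carefully counting faces incident to false vertices, using that a false vertex of degree $4$ in a triangulation-extension forces at least one non-triangular configuration) is where the real work lies, and I would present it as the heart of the proof.
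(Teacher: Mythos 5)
There is a genuine gap. The paper's proof is a one-line application of the global density bound for 1-planar graphs: a 1-planar graph on $N$ vertices has at most $4N-8$ edges, so $|E(G)|+|V(G)|=|E(G+P_1)|\le 4(|V(G)|+1)-8=4|V(G)|-4$, which is the claim. Your Euler count on $D^\times$ only gives $|E(G)|\le 2m+c-3$, and you correctly identify that everything then hinges on the bound $c\le m-1$ (equivalently $c\le |V(G+P_1)|-2$, which is precisely the nontrivial ingredient in the standard proof of the $4N-8$ bound) --- but you never prove it. The sketch you offer (``if all $m$ apex edges were crossed, one could redraw\dots'') only concerns crossings on edges incident with the apex vertex; crossings between two edges of $G$ also count towards $c$, and, as you notice yourself, $G$ need not be planar, so those crossings are not controlled by any argument about the apex star. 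Minimality of $c$ and 3-connectivity (which is in any case not available here, since one factor is $P_1$) do not by themselves yield $c\le m-1$.

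Your fallback refinement is moreover false: the claim $|E(D^\times)|\le 3(m+1+c)-6-c$ (each false vertex ``wastes'' one unit relative to the triangulation bound) would give $|E(G)|\le 2|V(G)|-3$, which is contradicted by the paper's own examples immediately after this lemma --- graphs $G_n$ on $n$ vertices with $3n-5$ edges whose join with $P_1$ is 1-planar, and $3n-5>2n-3$ for $n>2$. A false vertex costs nothing beyond what the independence of false vertices already accounts for; $D^\times$ can perfectly well be a triangulation even though it contains false vertices. So, as written, the proposal does not establish the lemma. To repair it along your lines you would need to actually prove $c\le N-2$ for a crossing-minimal 1-planar drawing on $N$ vertices (for instance via the standard ``kite'' argument, in which the four endpoints of each crossing pair span a plane quadrilateral and these quadrilaterals form faces of a plane quadrangulation, of which there are at most $N-2$), or simply invoke the known density bound $|E|\le 4|V|-8$, as the paper does.
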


\begin{proof}
From 1-planarity of $G+P_1$, we obtain $|E(G)|+|V(G)|=|E(G+P_1)|\le4|V(G+P_1)|-8=4|V(G)|-4$ which proves the claim. 
\end{proof}

Note that there are infinitely many positive integers $n$ for which there exist graphs $G_n$ on $n$ vertices and $3n-5$ edges such that $G_n+P_1$ is 1-planar: put $n = 2k$ with $k \geq 3$ being odd, take two paths $a_1 a_2\dots a_{k-1}a_k, b_1 b_{2} \dots b_{k-1} b_k$ and, for each $i \in \{1,\dots, k-1\}$, add new edges $a_ib_i, a_ib_{i+1},a_{i+1}b_i$ and the edge $a_kb_k$; in addition, for each odd $j \in \{1, k-2\}$, add new edges $a_ja_{j+2}$ and $b_jb_{j+2}$. The resulting graph $G_n$ is 1-planar with $n$ vertices and $3n-5$ edges and has an associated plane graph $G_n^\times$ such that, for each odd $j \in \{1, k-2\}$, the edges $a_ja_{j+2}, b_jb_{j+2}$ are true and incident with the outerface of $G_n^\times$ (see Figure \ref{fig:g10}). By putting a new vertex into this outerface and joining it with all vertices of $G_n^\times$, we obtain an 1-plane drawing of $G_n + P_1$.

\begin{figure}
\centerline{
\includegraphics{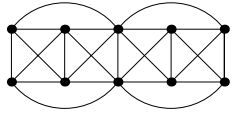}
}
\caption{The graph $G_{10}$.}
\label{fig:g10}
\end{figure}

\begin{lemma}\label{g2p1}
If $G+2P_1$ is 1-planar, then $|E(G)|\le 2|V(G)|$; moreover, this bound is sharp.
\end{lemma}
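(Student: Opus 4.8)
The inequality part is the same Euler-type estimate as in the previous lemma: if $G+2P_1$ is 1-planar, then since 1-planar graphs on $N$ vertices have at most $4N-8$ edges, we get $|E(G)| + 2|V(G)| = |E(G+2P_1)| \le 4(|V(G)|+2) - 8 = 4|V(G)|$, hence $|E(G)| \le 2|V(G)|$. (Here I use that the edge between the two vertices of $2P_1$ is absent, so $G+2P_1$ has exactly $|E(G)|+2|V(G)|$ edges.)

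For sharpness I need, for infinitely many $n$, a graph $G$ on $n$ vertices with exactly $2n$ edges such that $G+2P_1$ is 1-planar. The natural construction mirrors the one given after the previous lemma: build $G$ so that it already has a 1-plane drawing $D$ in which two faces (which will receive the two added vertices) are incident with all vertices of $G$, i.e. $G^\times$ has two faces whose boundaries together meet every vertex, and so that adding the two apex vertices and all $2n$ join-edges can be done with each new edge crossed at most once. A clean way is to take $G$ to be (a subgraph of) the join of a path or cycle structure with $2P_1$ already "half-built"; concretely, one can take $G$ to be a suitable triangulated cylinder / prism-like graph $C_k \times P_2$ augmented so that both the inner and outer faces see all vertices, placing one apex inside and one outside — each apex then connects to the $k$ vertices of the nearer cycle without crossings and to the $k$ vertices of the farther cycle with one crossing per edge. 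Counting: if $G$ has $n=2k$ vertices and the cylinder plus chords gives $2n$ edges, we are done. I would exhibit such a family explicitly and draw the picture, exactly in the style of Figure \ref{fig:g10}.

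The main obstacle is the sharpness construction: getting the edge count to hit $2n$ exactly \emph{and} simultaneously arranging that the two apex vertices can be joined to all $n$ vertices of $G$ with every new edge crossed at most once (and without creating a second crossing on edges of $G$ that are already crossed). The edge-count bookkeeping is delicate because the $2n$ join-edges from each apex must be routed past the existing drawing; the trick is to choose $G$'s drawing so that from the inner face every vertex is "visible" along a non-crossing arc and from the outer face every vertex is reachable by an arc crossing exactly one edge of $G$ (and vice versa), which forces $G$ to be fairly sparse — precisely sparse enough that $2n$ edges is the extremal value. I expect to verify 1-planarity of the resulting $G+2P_1$ directly from an explicit drawing rather than by any counting argument, since counting only gives the upper bound, not realizability.

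Finally, I would remark (as the paper does for the $G+P_1$ case) that this shows the bound $|E(G)|\le 2|V(G)|$ cannot be improved in general, even though it is presumably not sufficient for $G+2P_1$ to be 1-planar.
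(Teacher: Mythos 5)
Your proof is correct and follows essentially the same route as the paper: the inequality is the identical Euler-type bound $|E(G)|+2|V(G)|\le 4|V(G)|$, and your sharpness construction (a triangulated cylinder on $2k$ vertices with $4k$ edges, one apex placed inside and one outside, each joined crossing-free to the nearer cycle and with one crossing per edge to the farther cycle) is exactly the paper's example, namely the second power of the cycle $C_{2k}$ drawn as an antiprism. The only difference is that the paper names this graph explicitly and refers to a figure, whereas you leave the explicit family to be drawn; completing that picture is all that remains.
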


\begin{proof}
Again, from 1-planarity of $G+2P_1$, it follows that $|E(G)|+2|V(G)|=|E(G+2P_1)|\le4|V(G+2P_1)|-8=4|V(G)|$. To see that the bound is sharp, it is sufficient to consider the second power of the cycle $C_{2k}$, $k\ge3$.
\end{proof}
\begin{figure}
\centerline{
\includegraphics[angle=67.5]{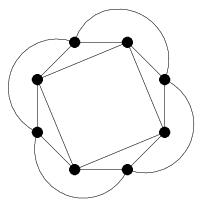}
}
\caption{The second power of the cycle $C_8$.}
\label{fig:cn2}
\end{figure}

In a similar manner, we can prove

\begin{lemma}
If $G+P_2$ is 1-planar, then $|E(G)|\le 2|V(G)|-1$. 
\end{lemma}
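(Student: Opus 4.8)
The plan is to mirror the Euler-type counting arguments already used for the joins $G+P_1$ and $G+2P_1$ in Lemmas above. Suppose $G+P_2$ is 1-planar, let $|V(G)|=n$ and $|E(G)|=m$. Then $G+P_2$ has $n+2$ vertices and $m+2n+1$ edges (the $m$ edges of $G$, the one edge of $P_2$, and the $2n$ edges joining $V(G)$ to the two vertices of $P_2$). Since every simple 1-planar graph on $N$ vertices has at most $4N-8$ edges, we get
\[
m+2n+1 = |E(G+P_2)| \le 4|V(G+P_2)| - 8 = 4(n+2) - 8 = 4n,
\]
whence $m \le 2n-1$, which is exactly the claimed bound.

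The only non-routine point is the edge-count for $G+P_2$: one must be careful that $P_2$ contributes a single edge (not zero, as $P_1$ does, and not more), so the total is $m+1+2n$ rather than $m+2n$ as in the $G+2P_1$ case; this accounts for the slightly stronger bound $2n-1$ instead of $2n$. Everything else is the same inequality $|E| \le 4|V|-8$ for 1-planar graphs that the paper has been invoking throughout, so no new structural or drawing argument is needed.

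I expect no real obstacle here; the statement is a one-line consequence of the 1-planar edge bound, exactly parallel to the two preceding lemmas, and indeed the excerpt's phrase ``In a similar manner, we can prove'' signals that this is intended as a routine variant. If one wished, one could also remark on sharpness (as was done for $G+2P_1$), but the statement as given does not claim sharpness, so the proof can stop at the inequality.
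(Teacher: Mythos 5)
Your proof is correct and is exactly the argument the paper intends by ``In a similar manner, we can prove'': the same edge bound $|E|\le 4|V|-8$ for 1-planar graphs applied to $G+P_2$, with the edge count $|E(G)|+2|V(G)|+1$ yielding $|E(G)|\le 2|V(G)|-1$. No difference in approach and no gap.
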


Note that for any positive integer $n\ge 5$ there exists a graph $G_n$ on $n$ vertices and $2n-2$ edges such that $G_n+P_2$ is 1-planar: Let $C_n=v_1v_2\dots v_nv_1$ be plane drawing of the cycle on $n$ vertices. Add the edges $v_iv_{i+2}$ for $i=1,3,\dots, 2\cdot\left\lfloor \frac{n-1}{2}\right\rfloor-1$ to the outer face of $C_n$ and add the edges $v_jv_{j+2}$ for $j=2,4,\dots, 2\cdot\left\lfloor \frac{n}{2}\right\rfloor-2$ to the inner face of $C_n$.

\begin{lemma}
If $G+2P_1$ is 1-planar, then $G$ does not contain $K_{7,1}$ or $K_{3,3}$ as a subgraph. 
\end{lemma}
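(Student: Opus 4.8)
The plan is to prove the contrapositive in two separate cases, using the elementary fact that $H\subseteq G$ implies $H+2P_1\subseteq G+2P_1$. Since 1-planarity is closed under taking subgraphs, it then suffices to exhibit, for each of $H=K_{7,1}$ and $H=K_{3,3}$, a non-1-planar subgraph of $H+2P_1$. In both cases $H+2P_1$ turns out to be a complete multipartite graph, so the whole argument reduces to a look-up in the classification of 1-planar complete multipartite graphs recorded in Table~\ref{tab:1p} (from \cite{CH}).

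First I would handle $K_{7,1}\subseteq G$. Writing $K_{7,1}$ as the star with centre $c$ and leaves $\ell_1,\dots,\ell_7$, and letting $w_1,w_2$ be the two vertices of $2P_1$, one checks directly that $K_{7,1}+2P_1$ is the complete tripartite graph with parts $\{\ell_1,\dots,\ell_7\}$, $\{c\}$, $\{w_1,w_2\}$, i.e. $K_{7,2,1}$; in particular it contains $K_{7,3}$, taking the $7$ leaves as one class and $\{c,w_1,w_2\}$ as the other. Table~\ref{tab:1p} lists $K_{m,3}$ as 1-planar only for $3\le m\le 6$, so $K_{7,3}$ — and therefore $K_{7,2,1}$ and $G+2P_1$ — is not 1-planar. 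For $K_{3,3}\subseteq G$ the argument is even shorter: $K_{3,3}+2P_1$ is the complete tripartite graph on parts of sizes $3,3,2$, namely $K_{3,3,2}$, and from the $k=3$ row of Table~\ref{tab:1p} the only 1-planar complete tripartite graphs with a class of size at least $3$ are $K_{m,1,1}$, $K_{3,3,1}$, $K_{m,2,1}$ with $m\le 6$, and $K_{m,2,2}$ with $m\le 4$; since $K_{3,3,2}$ is none of these, it is not 1-planar, and again $G+2P_1$ cannot be 1-planar.

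I do not expect a genuine obstacle here: the statement is an immediate consequence of the complete-multipartite classification once the relevant joins are identified as multipartite graphs. The only point that requires a moment of care is reading Table~\ref{tab:1p} correctly — in particular noticing that $K_{7,2,1}$ (equivalently its subgraph $K_{7,3}$) falls outside the ranges ``$K_{2-6,2,1}$'' / ``$K_{3-6,3}$'', and that $K_{3,3,2}$ is genuinely different from $K_{3,2,2}=K_{3-4,2,2}$, so that neither graph is covered by any of the listed families.
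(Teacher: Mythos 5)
Your argument is correct and is essentially the paper's own proof: both reduce to observing that $K_{7,1}\subseteq G$ forces the non-1-planar subgraph $K_{7,3}$ in $G+2P_1$, and $K_{3,3}\subseteq G$ forces $K_{3,3,2}$, then invoking the classification from \cite{CH} (Table~\ref{tab:1p}). Your explicit identification of $K_{7,1}+2P_1$ as $K_{7,2,1}$ is a harmless extra step; nothing further is needed.
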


\begin{proof}
If $G$ contains at least one of the graphs $K_{7,1}$, $K_{3,3}$ as a subgraph, then $G+2P_1$ contains a subgraph $K_{7,3}$ or $K_{3,3,2}$ which is not 1-planar.
\end{proof}

\begin{lemma}
If $G+P_2$ is 1-planar, then $G$ does not contain $K_{7,1}$, $K_{3,3}$, $K_{4,2}$, $K_{3,1,1}$ as a subgraph. 
\end{lemma}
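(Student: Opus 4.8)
The plan is to mimic exactly the structure of the preceding lemma (the one for $G+2P_1$): if $G$ contains one of the forbidden graphs $K_{7,1}$, $K_{3,3}$, $K_{4,2}$, or $K_{3,1,1}$ as a subgraph, then adding the join edges to the two vertices of $P_2$ produces a complete multipartite subgraph of $G+P_2$ that is already known (via Table \ref{tab:1p}) not to be 1-planar; since 1-planarity is subgraph-monotone, this contradicts 1-planarity of $G+P_2$.

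Concretely, I would argue case by case. The two vertices of $P_2$ are joined to each other and to every vertex of $G$, so together with any subgraph $S \subseteq G$ the join contains $S + P_2$. Taking $S = K_{7,1}$ gives a subgraph containing $K_{7,1,1} \supseteq K_{7,2}$, hence $K_{7,3}$-free arguments aside, already $K_{7,1,1}$ (equivalently $K_{7,2}$ plus an edge, i.e. the complete tripartite $K_{7,1,1}$) is not 1-planar by Table \ref{tab:1p} (the $k=3$ row allows only $K_{2\text{-}6,2,1}$ and smaller, so $K_{7,1,1}$ is excluded). Taking $S = K_{3,3}$ gives $K_{3,3} + P_2 \supseteq K_{3,3,1,1} \supseteq K_{3,3,2}$, which is not 1-planar (the $k=4$ row forbids it). Taking $S = K_{4,2}$ gives $K_{4,2} + P_2 \supseteq K_{4,2,1,1}$, again excluded by the $k=4$ row (only $K_{2\text{-}3,2,1,1}$ is allowed). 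Taking $S = K_{3,1,1}$ gives $K_{3,1,1} + P_2 \supseteq K_{3,1,1,1,1}$, excluded by the $k=5$ row (only $K_{1\text{-}2,1\text{-}2,1,1,1}$ is allowed). In each case we have exhibited a non-1-planar subgraph of $G+P_2$, so $G+P_2$ is not 1-planar, and contraposition finishes the proof.

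There is essentially no obstacle here; the only care needed is bookkeeping to confirm that each of the four complete multipartite graphs produced really does fail to appear in Table \ref{tab:1p}, and that we have correctly accounted for how the edge of $P_2$ merges the two singleton parts with everything else. In particular one must note that joining $P_2$ (an edge $u v$) to a bipartite-style graph does not create a larger independent part: $u$ and $v$ are adjacent, so they remain two separate parts of size one in the resulting complete multipartite graph, which is why we land on $K_{7,1,1}$, $K_{3,3,1,1}$, $K_{4,2,1,1}$, $K_{3,1,1,1,1}$ rather than on graphs with a part of size two. The proof is then a direct copy of the $G+2P_1$ lemma's proof, with "$2P_1$" replaced by "$P_2$" and the four-element list of forbidden subgraphs substituted in.

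\begin{proof}
If $G$ contains at least one of the graphs $K_{7,1}$, $K_{3,3}$, $K_{4,2}$, $K_{3,1,1}$ as a subgraph, then $G+P_2$ contains a subgraph $K_{7,1,1}$, $K_{3,3,1,1}$, $K_{4,2,1,1}$, or $K_{3,1,1,1,1}$, respectively. None of these graphs is 1-planar (see Table \ref{tab:1p}), hence $G+P_2$ is not 1-planar.
\end{proof}
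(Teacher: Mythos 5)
Your overall strategy is the same as the paper's: exhibit a non-1-planar complete multipartite subgraph of $G+P_2$ for each forbidden subgraph and invoke subgraph-monotonicity of 1-planarity, and your treatment of the cases $K_{3,3}$, $K_{4,2}$ and $K_{3,1,1}$ (via $K_{3,3,1,1}$, $K_{4,2,1,1}$, $K_{3,1,1,1,1}$) coincides with the paper's argument. However, the first case contains a genuine error: you claim that $K_{7,1,1}$ is excluded by the $k=3$ row of Table \ref{tab:1p}, but that row begins with $K_{1-,1,1}$, i.e.\ $K_{a,1,1}$ is 1-planar for \emph{every} $a\geq 1$; in fact $K_{7,1,1}$ is even planar (it is a book of seven triangles sharing the edge between the two singleton vertices). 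So exhibiting $K_{7,1,1}$ inside $G+P_2$ yields no contradiction, and your reduction for the $K_{7,1}$ case collapses. (Your parenthetical fallback $K_{7,2}$ does not help either, since $K_{2-,2}$ is also 1-planar.)

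The repair is what the paper does: if $G\supseteq K_{7,1}$, then the centre of that star together with the two vertices of $P_2$ are three vertices each adjacent to all seven leaves, so $G+P_2$ contains $K_{7,3}$, which is not 1-planar (the table admits only $K_{3-6,3}$ among the graphs $K_{a,3}$). With $K_{7,3}$ substituted for $K_{7,1,1}$ in your first case, your proof matches the paper's.
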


\begin{proof}
If $G$ contains at least one of the graphs $K_{7,1}$, $K_{3,3}$, $K_{4,2}$, $K_{3,1,1}$ as a subgraph, then $G+P_2$ contains a non-1-planar subgraph from the set $\{K_{7,3}$, $K_{3,3,1,1}$, $K_{4,2,1,1}$, $K_{3,1,1,1,1}\}$.
\end{proof}

\begin{corollary}
If $G+2P_1$ or $G+P_2$ is 1-planar, then the maximum degree of $G$ is at most 6.
\end{corollary}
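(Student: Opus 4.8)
The plan is to prove the contrapositive at the level of a single vertex: suppose that $G$ contains a vertex $v$ of degree at least $7$, and show that then neither $G+2P_1$ nor $G+P_2$ can be $1$-planar. First I would observe that $v$ together with any seven of its neighbours spans, as a subgraph of $G$, a copy of the star $K_{1,7}=K_{7,1}$; thus a vertex of degree at least $7$ is the same thing as a $K_{7,1}$-subgraph rooted at that vertex.

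Second, I would invoke the two lemmas immediately preceding this corollary. One of them asserts that if $G+2P_1$ is $1$-planar then $G$ has no $K_{7,1}$ subgraph (since otherwise $G+2P_1$ would contain $K_{7,3}$, which is not $1$-planar by Table~\ref{tab:1p}); the other asserts the same conclusion under the hypothesis that $G+P_2$ is $1$-planar. Combining these with the first observation: if either $G+2P_1$ or $G+P_2$ is $1$-planar, then $G$ contains no $K_{7,1}$ subgraph, hence every vertex of $G$ has degree at most $6$.

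There is essentially no obstacle here, as the corollary is a direct consequence of results already established. The only point requiring a moment's care is the translation between "$G$ has a vertex of degree at least $7$" and "$G$ has a $K_{7,1}$ subgraph"; once that is noted, the forbidden-subgraph argument (ultimately resting on the non-$1$-planarity of $K_{7,3}$) does all the work, and no new drawing argument is needed.
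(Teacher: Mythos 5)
Your proposal is correct and is exactly the argument the paper intends: the corollary follows directly from the two preceding lemmas (a vertex of degree at least $7$ yields a $K_{7,1}$ subgraph, hence a non-1-planar $K_{7,3}$ inside the join), and no further work is needed.
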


For sufficient conditions, we first deal with the case when $G$ is a plane graph. For this, we introduce several specialized notions:

A plane graph $G$ is called {\em 2-outerplanar} if the graph obtained from $G$ by deleting all vertices of the outerface of $G$ is outerplanar (the corresponding drawing is called 2-outerplane drawing).
A face of a plane graph $G$ is called {\em exposed} if it is adjacent to its outerface. Let ${\cal{P}}^\square$ be the family of all 2-outerplanar graphs with the property that, for each exposed face $f$, the number of vertices of $f$ which are not incident with the outerface is not greater than the number of common edges of $f$ and the outerface.

\begin{proposition}
\label{suf1}
If $G \in {\cal{P}}^\square$, then $G + P_1$ is 1-planar.
\end{proposition}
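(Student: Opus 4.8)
The plan is to take a 2-outerplane drawing of $G$ witnessing membership in ${\cal P}^\square$ and to insert the single vertex of $P_1$ into the outerface, joining it to every vertex of $G$ in such a way that each new edge is crossed at most once. First I would observe that the vertices of $G$ that lie on the outerface can all be reached by the new apex vertex $u$ without any crossing at all: placing $u$ in the outerface, the edges from $u$ to the outer vertices can be drawn inside the outerface, and since these vertices appear in cyclic order on the boundary of the outerface, these edges are pairwise noncrossing and cross nothing of $G$. The real work is to reach the vertices of $G$ that do \emph{not} lie on the outerface; by 2-outerplanarity these form an outerplanar graph $G'$ sitting ``one layer in,'' so every such vertex lies on the boundary of some exposed face $f$.

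Next I would route, for each non-outer vertex $w$, the edge $uw$ as follows: it travels through the outerface up to the boundary of the exposed face $f$ containing $w$, crosses exactly one edge of $\partial f$ that is shared with the outerface (there is at least one such common edge, since $f$ is exposed), and then proceeds inside $f$ straight to $w$. The key counting point is exactly the defining condition of ${\cal P}^\square$: for a fixed exposed face $f$, the number of non-outer vertices on $f$ that we must reach this way is at most the number of edges that $f$ shares with the outerface, so we can set up an injection from the non-outer vertices of $f$ to the common edges of $f$ and the outerface, and use a distinct crossed edge for each $uw$. This guarantees every edge of $G$ shared with the outerface is crossed at most once. The segment of $uw$ lying inside $f$ can be drawn without crossing anything, because $f$ is a face of the plane graph $G$ and within a single face we may connect a boundary point to a boundary vertex by an arc disjoint from all other arcs entering $f$ (here I would route all the at most $\deg$-many arcs entering $f$ in a ``fan'' so that they are pairwise noncrossing inside $f$); and the portion of $uw$ in the outerface is crossing-free by the same fan argument applied to the outerface together with the crossing-free edges $uv$ to outer vertices $v$. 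Hence each new edge is crossed exactly once (on $\partial f$) or not at all, and no edge of $G$ is crossed more than once, so the resulting drawing is 1-planar.

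The main obstacle I expect is making the ``route all arcs entering a face as a noncrossing fan'' step fully rigorous, i.e.\ checking that the arcs $uw$ for different non-outer vertices $w$ (possibly sharing the same face $f$, possibly in different faces) together with the crossing-free spokes $uv$ can all be realized simultaneously without any two of them crossing each other. This is a planarity/rotation-system bookkeeping argument: one should fix the cyclic order in which the arcs leave $u$ to match the order in which their target vertices (or the crossed boundary edges) appear around $u$ as seen from the outerface, and then the disjointness inside each face follows because each face is a disk and the entry points on its boundary can be taken in the induced cyclic order. A secondary point to be careful about is the degenerate situations — exposed faces that are incident with the outerface along several disjoint boundary walks, or a vertex of $G$ that is simultaneously on the outerface and on an exposed face — but in all these cases the inequality in the definition of ${\cal P}^\square$ still supplies enough distinct common edges, so the same injection-and-fan construction goes through.
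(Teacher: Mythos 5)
Your construction is essentially the paper's own proof: insert the apex vertex into the outerface of a 2-outerplane drawing, join it crossing-free to all outerface vertices, and reach each non-outer vertex of an exposed face by crossing one distinct edge shared by that face and the outerface, which the defining inequality of ${\cal{P}}^\square$ makes possible. The additional routing bookkeeping you describe (the noncrossing fan inside each face and around the apex) and the step that every non-outer vertex lies on an exposed face are treated at the same (informal) level in the paper, so your proposal matches its argument.
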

\begin{proof}
Taking a 2-outerplane drawing of $G \in {\cal{P}}^\square$, insert a new vertex $x$ into the outerface $f_O$ of $G$. Now, connect $x$ with all vertices incident with $f_O$ and, for each exposed face $f$, add new edges between $x$ and vertices which do not belong to $f_O$ in such a way that each new edge crossed exactly one edge incident with $f_O$; by the definition of ${\cal{P}}^\square$, this is always possible, hence, we obtain that the resulting graph, which is $G + P_1$, is indeed 1-planar. 
\end{proof}

Note that the converse of Proposition \ref{suf1} is not true: for graph $H$ in Figure \ref{fig:cube}, the join $H + P_1$ is 1-planar, but $H \not \in {\cal{P}}^\square$ (although $H$ is 2-outerplanar, for each its 2-outerplane drawing, the second condition of the definition of ${\cal{P}}^\square$ is not satisfied).

\begin{figure}
\centerline{
\includegraphics{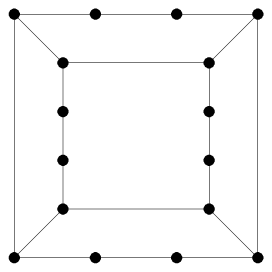}\hspace{1cm}
\includegraphics{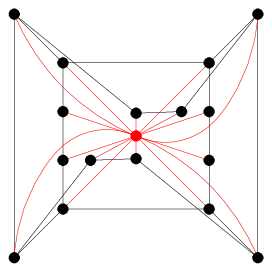}
}
\caption{The subdivided cube $H$ and the join $H + P_1$.}
\label{fig:cube}
\end{figure}

Another sufficient condition for 1-planarity of $G + P_1$ is based on the notion of so called {\em outer-1-planar graph}  defined by Eggleton in \cite{E} as a possible non-planar generalization of outerplanar graphs: a 1-planar graph $G$ is outer-1-planar if there exists its 1-planar drawing $D$ such that all true vertices of $D^\times$ lie on the outerface of $D^\times$. It is easy to see (arguing like in Proposition \ref{suf1}) that the following holds:

\begin{proposition}
\label{suf2}
If $G$ is outer-1-planar, then $G + P_1$ is 1-planar.
\end{proposition}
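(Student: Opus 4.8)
The plan is to mimic the construction in the proof of Proposition \ref{suf1}, but starting from an outer-1-planar drawing instead of a 2-outerplane drawing; the whole point of outer-1-planarity is that it forces all the vertices we must reach to lie on a single face, so the new edges can be added with no crossing at all.

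First I would fix a 1-planar drawing $D$ of $G$ witnessing that $G$ is outer-1-planar, so that every true vertex of $D^\times$ — that is, every vertex of $G$ — is incident with the outerface $f_O$ of $D^\times$. Then I would insert a new vertex $x$ in the interior of $f_O$ and, for each $v\in V(G)$, draw an arc from $x$ to $v$ lying entirely inside $f_O$ (this is possible since $v$ is incident with $f_O$, hence points of $f_O$ lie arbitrarily close to $v$). These $|V(G)|$ arcs are exactly the edges of $G+P_1$ that are not edges of $G$, so together with $D$ they form a drawing $D'$ of $G+P_1$.

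Next I would verify that $D'$ is 1-planar. Every edge of $G$ keeps the drawing it had in $D$, hence is still crossed at most once, so it remains only to choose the new arcs so that they cross nothing. Each new arc stays inside $f_O$, so it crosses no edge of $D^\times$, and therefore meets no edge of $G$ (neither an uncrossed edge nor a half of a crossed one). Finally, traversing the boundary walk of $f_O$ yields a cyclic order in which the vertices of $G$ are met along $f_O$; selecting one occurrence of each vertex and drawing the arcs from $x$ in that cyclic order — as pairwise non-crossing spokes inside the region $f_O$ — ensures that no two new arcs cross each other. Hence $D'$ is a 1-planar drawing of $G+P_1$.

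The only mildly delicate point is this last one: the boundary of $f_O$ need not be a simple cycle, since $G$ may be disconnected or have cut vertices, so some (true or false) vertex may be visited more than once by the boundary walk. This is harmless: $f_O$ is a connected open region whose boundary is traversed by a single closed walk, and from an interior point one can always draw pairwise non-crossing arcs to any prescribed set of boundary points taken in the order they occur along that walk. Thus the argument is essentially the same bookkeeping as in Proposition \ref{suf1}, with the added bonus that here the new edges are not crossed at all, and the proposition follows.
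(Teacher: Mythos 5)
Your proposal is correct and is exactly the argument the paper intends: the paper proves Proposition \ref{suf2} only by the remark that one argues as in Proposition \ref{suf1}, i.e.\ place a new vertex in the outerface of $D^\times$ and join it to all (true) vertices, which lie on that face, by pairwise non-crossing arcs. Your write-up just makes this explicit, including the harmless boundary-walk bookkeeping, so it matches the paper's approach.
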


However, also the converse of Proposition \ref{suf2} is not true: Figure \ref{fig:out1} shows an example of 8-vertex graph which is not outer-1-planar (this follows from the fact that $n$-vertex outer-1-planar graph has at most $\frac{5}{2}n - 4$ edges, see \cite{Ax}) but yields 1-planar join with $P_1$.

\begin{figure}
\centerline{
\includegraphics{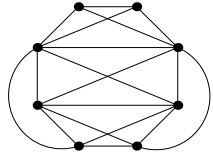}
}
\caption{A non outer-1-planar graph giving a 1-planar join with $P_1$.}
\label{fig:out1}
\end{figure}

All these examples show that the fact that $G + P_1$ is planar if and only if $G$ is outerplanar (see \cite{CLZ}) cannot be easily generalized to cover the 1-planarity of $G + P_1$ in terms of "nice" properties of $G$ (that is, relying only on the abstract structure of $G$).

\end{document}